\theoremstyle{plain}
\newtheorem{theorem}{Theorem}[section]
\newtheorem{lem}[theorem]{Lemma}
\newtheorem{prop}[theorem]{Proposition}
\theoremstyle{remark}
\renewcommand{\dots}[0]{\ldots}
\newcommand{\beq}[1]{\begin{equation}\label{#1}}
\newcommand{\enq}[0]{\end{equation}}
\theoremstyle{plain}
\theoremstyle{remark}
\renewcommand{\dots}[0]{\ldots}
\title{\textbf{\raggedright On Maximal Left-Compressed Intersecting Families Generated by a Collection of Subsets of [$n$]}}
\author{\raggedright \textnormal {Nguyen Trong Tuan }}
\date{} 
\begin{document}
\maketitle

\thispagestyle{firstpage}
\pagestyle{empty}  
\begin{flushleft}
\textbf{Abstract.} \,
We provide a characterization of maximal left-compressed families based on their generating sets $\mathcal{G}\subseteq 2^{[n]}$. We show that there is a one-to-one correspondence between maximal left-compressed families $\mathcal{A}\subseteq \binom{[n]}{k}$ and principal generating sets. Moreover, we give a complete description of maximal left-compressed intersecting families having exactly two maximal generators. Based on this, for any $X\subseteq [2,n]$, we compare $\mathcal{A}(X)$, where $\mathcal{A}$ is a rank-2 maximal left-compressed intersecting family with exactly two maximal generators, and $\mathcal{S}(X)$, where $\mathcal{S}$ denotes the Star family.
\end{flushleft}

\noindent \textbf{Keywords:} intersecting family; left-compressed family; generating sets; $k$-subsets.

\medskip
\noindent \textbf{Mathematics Subject Classification:} 05D05.

\section{Introduction}

Throughout this paper, a subset $A$ of $[n]$ consisting of $r$ elements is called an $r$-set and always is listed in ascending order: $A=\{a_{1},a_{2},\ldots,a_{r}\}$, where $1\leq a_{1}< a_{2}<\ldots<a_{r} \leq n$. For each positive integer $n$, we denote $[n]=\{1,2,\ldots,n\}$ and let $m \leq n$ be positive integers, we denote $[m,n]= \{m,m+1,\ldots,n\}$. \\ \\
In what follows, we assume that $n$ and $k$ are positive integers satisfying $4 \leq 2k \leq n$. We denote $\binom{[n]}{k}$ as the collection of all $k$-sets of $[n]$. A family $\mathcal{A}$ is $k$-uniform if every member of $\mathcal{A}$ contains exactly $k$ elements. A family $\mathcal{A} \subseteq 2^{[n]}$ is called \textit{ intersecting } if for all $S,T \in \mathcal{A}$, we have $S \cap T \neq \emptyset$. An intersecting family $\mathcal{A}$ is \textit{trivial} if $\bigcap_{A \in \mathcal{A}}A \neq \emptyset$. The families $\mathcal{A}$ and $ \mathcal{B}$ are said to be \textit{cross-intersecting} if any $A \in \mathcal{A}$ and $B \in \mathcal{B}$, we have $A \cap B \neq \emptyset$. Given two sets $A,B \in \binom{[n]}{k}$ with $A=\{a_{1},a_{2},\ldots,a_{k} \}$ and $B=\{b_{1},b_{2},\ldots,b_{k}\}$, we define $A \leq B$ if $a_{i} \leq b_{i}$ for all $i=1,2,\ldots,k$. We also define $A < B$ if $A \leq B$ and there exists $1 \leq i \leq k$ such that $a_{i} < b_{i}$. We say that two sets $A$ and $B$ (not necessarily distinct) are \textit{strongly intersecting} and write $A \sim_{si} B$ if for any $A',B'$ such that $A'\leq A, B'\leq B$ then $A' \cap B' \neq \emptyset$. A family is said to be strongly intersecting if every pair of its members is strongly intersecting.
A family $\mathcal{A} \subseteq \binom{[n]}{k}$ is said to be \textit{left-compressed} if for every $A \in \mathcal{A}$ and for every $ B$ such that $B \leq A$, we also have $B \in \mathcal{A}$. In fact, left-compressed families and the compression technique (see \cite{Fr87} for a survey) are powerful tools in extremal set theory. For $A \in 2^{[n]}$, we define $\mathcal{L}(A)=\left\{S \in \binom{[n]}{|A|}: S \leq A\right\}$. It is obvious that $\mathcal{L}(A)$ is left-compressed and we say that $\mathcal{L}(A)$ is the \textit{left-compression
closure} of $A$. A $k$-set $A$ in the left-compressed family $\mathcal{A} \subseteq \binom{[n]}{k}$ is \textit{maximal set in $\mathcal{A}$}   if there does not exist any $B \in \mathcal{A}$ such that $A < B$. For two sets $A=\{a_{1},a_{2},\ldots,a_{r}\} \in 2^{[n]}$ and $B=\{b_{1},b_{2},\ldots,b_{s} \} \in 2^{[n]}$, we define $A \preceq B$ if  $r \geq s$ and $a_{i} \leq b_{i}$ for all $1 \leq i \leq s$. Note that the order relations "$\leq$" on $\binom{[n]}{k}$ and "$\preceq$" on $2^{[n]}$ are both partial orders. Therefore, every nonempty subset of $\binom{[n]}{k}$ (or $2^{[n]}$) has at least one maximal element.\\ \\
We refer to a family that is both intersecting and left-compressed as a  \textit{left-compressed intersecting family }, abbreviated as LCIF. It is obvious that if $\mathcal{A}$ is LCIF and $A,B \in \mathcal{A}$ then $A \sim_{si}B$, equivalently, $\mathcal{F}({A})$ and $\mathcal{F}(B)$ are cross-intersecting. Finally, we say an intersecting family $\mathcal{A} \subseteq \binom{[n]}{k}$  is \textit{maximal} if no other set can be added to $\mathcal{A}$ while preserving the intersecting property. There are some well-known MLCIF families: the \textit{Star family} $S=\{ A\in \binom{[n]}{k}: 1\in A\}$; the family $\mathcal{A}_{2,3}=\{A \in \binom{[n]}{k}:|A\cap \{1,2,3\}|\geq 2\}$ and the \textit{Hilton-Milner family} $\mathcal{H}\mathcal{M}=\{A\in \mathcal{S}:A\cap \{2,\ldots,k+1\} \neq \emptyset \}\cup [2,k+1]$.
\\ \\
 To study LCIFs, we need a suitable way to describe left-compressed families. We find that if $A \preceq G$ and $B \leq A$ then $B \preceq G$. This observation provides the basis for defining a left-compressed family over $\binom{[n]}{k}$. This definition is a modification of that of Ahlswede and Khachatrian \cite{AK}. 
Let $\mathcal{G}$ be a collection of subsets of $[n]$. We define a $k$-uniform family as follows:
    $$\mathcal{F}(n,k,\mathcal{G})=\bigg \{S \in \binom{[n]}{k} : S \preceq G \text{ for some } G \in \mathcal{G}  \bigg \}$$
It is clear that if there exists $G \in \mathcal{G}$ such that $|G| > k$ then there does not exist $k$-set such that $S \preceq G$. Therefore, we assume that $|G| \le k$, for every $G \in \mathcal{G}$. When there is no risk of confusion, we may write $\mathcal{F}(\mathcal{G})$ instead of $\mathcal{F}(n,k,\mathcal{G})$. In the special case  when $\mathcal{G}=\{G\}$ with $G=\{g_{1},\ldots,g_{r}\}$, we simply write $\mathcal{F}(G)$ or $\mathcal{F}(g_{1},\ldots,g_{r})$ instead of $\mathcal{F}(\{G\})$. If $|G|=k$ then $\mathcal{F}(G)$ is the set of all $k$-set $S$ satisfying $S \leq G$ and in this case, we have $\mathcal{L}(G)=\mathcal{F}(G)$. We see that $\mathcal{F}(\mathcal{G})$ is left-compressed and $\mathcal{F}(\mathcal{G})=\bigcup_{G \in \mathcal{G}}\mathcal{F}(G)$. It is clear that not every $\mathcal{F}(\mathcal{G})$ is an intersecting family. For example, the families $\mathcal{F}(\{\{2,3\}\}),\mathcal{F}(\{\{1,3\},\{1,4,5\},\{2,3,5\} \}$ are intersecting, while the families $\mathcal{F}(\{\{2,4\}\}), \mathcal{F}(\{\{2,3\},\{2,4,5\} \}$ are not intersecting. Therefore, a natural question arises: under what conditions on $\mathcal{G}$, the $\mathcal{F}(\mathcal{G})$ is an intersecting family? In \cite{TTT}, the authors provided a necessary and sufficient condition on $\mathcal{G}$ for $\mathcal{F}(\mathcal{G})$ to be an intersecting family.
\begin{theorem}(Tuan Nguyen-Thi Nguyen-Thu Tran \cite{TTT}) 
Let $\mathcal{G} \subseteq  2^{[n]}$. The family $\mathcal{F}(\mathcal{G})$ is intersecting if and only if for every $G, H \in \mathcal{G}$ (possibly identical), there exists an $1 \le \ell \le n$ such that $\mu_{G}(\ell) + \mu_{H}(\ell) > \ell$. 
\end{theorem}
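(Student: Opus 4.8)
\textit{Proof strategy.} I would prove this by reducing to pairs of generators and then running a matching (Hall) argument. Since $\mathcal{F}(\mathcal{G})=\bigcup_{G\in\mathcal{G}}\mathcal{F}(G)$, the family $\mathcal{F}(\mathcal{G})$ is intersecting if and only if $\mathcal{F}(G)$ and $\mathcal{F}(H)$ are cross-intersecting for every (not necessarily distinct) pair $G,H\in\mathcal{G}$; so it suffices to show, for fixed $G,H$ with $|G|,|H|\le k$, that $\mathcal{F}(G)$ and $\mathcal{F}(H)$ \emph{fail} to be cross-intersecting exactly when $\mu_G(\ell)+\mu_H(\ell)\le\ell$ for all $\ell\in[n]$ (here $\mu_G(\ell)=|G\cap[\ell]|$, the number of elements of $G$ that are $\le\ell$). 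The first thing I would record is the two-sided estimate linking $\preceq$ to prefix counts: a $k$-set $S$ satisfies $S\preceq G$ if and only if $|S\cap[\ell]|\ge\mu_G(\ell)$ for every $\ell\in[n]$. The forward direction holds because $s_i\le g_i$ forces the first $\mu_G(\ell)$ elements of $S$ into $[\ell]$; the reverse follows by taking $\ell=g_i$, which gives $|S\cap[g_i]|\ge i$ and hence $s_i\le g_i$.

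For the ``if'' direction of the theorem I would argue directly. Suppose $\mu_G(\ell_0)+\mu_H(\ell_0)>\ell_0$ for some $\ell_0$. Then for any $S\in\mathcal{F}(G)$ and $T\in\mathcal{F}(H)$, using the estimate above,
\[
|S\cap[\ell_0]|+|T\cap[\ell_0]|\;\ge\;\mu_G(\ell_0)+\mu_H(\ell_0)\;>\;\ell_0\;\ge\;|(S\cup T)\cap[\ell_0]|,
\]
so $S$ and $T$ must meet inside $[\ell_0]$. Running this over all pairs $G,H\in\mathcal{G}$ shows $\mathcal{F}(\mathcal{G})$ is intersecting.

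For the converse I would prove the contrapositive by an explicit construction. Assume $\mu_G(\ell)+\mu_H(\ell)\le\ell$ for all $\ell$, and set $r=|G|$, $s=|H|$; the goal is to exhibit disjoint $S\in\mathcal{F}(G)$ and $T\in\mathcal{F}(H)$. Consider the bipartite graph whose left vertices are the $r+s$ ``slots'' (the $r$ slots of $G$ and the $s$ slots of $H$), whose right vertices are the ground set $[r+s]$, and where the $i$-th $G$-slot is joined to $\{1,\dots,g_i\}\cap[r+s]$ and the $j$-th $H$-slot to $\{1,\dots,h_j\}\cap[r+s]$. The hypothesis is precisely Hall's condition for a perfect matching: the sorted multiset $\{g_1,\dots,g_r\}\cup\{h_1,\dots,h_s\}$ is term-by-term at least $(1,2,\dots,r+s)$ --- its $m$-th smallest entry is $<m$ only if more than $m-1$ of the bounds lie in $[m-1]$, i.e.\ only if $\mu_G(m-1)+\mu_H(m-1)>m-1$ --- so any $m$ slots include one with bound $\ge m$, and dually any $m$ values have at least $m$ slot-neighbours. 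Take the resulting perfect matching; let $A$ be the set of values matched to $G$-slots and $B$ those matched to $H$-slots, so $A\sqcup B=[r+s]$. Listing $A$ increasingly as $a_1<\cdots<a_r$ gives $a_i\le g_i$ (the standard rearrangement argument for systems of distinct representatives), and similarly $b_1<\cdots<b_s$ with $b_j\le h_j$. Finally I would pad these to $k$-sets using the block $\{r+s+1,\dots,2k\}\subseteq[n]$, which is available because $2k\le n$: set $S=A\cup\{r+s+1,\dots,s+k\}$ and $T=B\cup\{s+k+1,\dots,2k\}$. Since all appended elements exceed $r+s$ and $r,s\le k$, one checks that $S$ and $T$ are $k$-sets, that the first $r$ elements of $S$ are $a_1<\cdots<a_r$ so $S\preceq G$ (and likewise $T\preceq H$), and that $S\cap T=\emptyset$. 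This produces the required disjoint members and finishes the contrapositive.

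The routine parts are the two prefix-count estimates and the final padding; the step I expect to be the main obstacle --- or at least the one demanding the most care --- is the construction: recognising that the numerical hypothesis $\mu_G(\ell)+\mu_H(\ell)\le\ell$ is exactly the right Hall condition (via the sorted-multiset reformulation) and that the disjoint ``small parts'' $A,B$ can be realised inside $[r+s]$, which is what lets the uniform extension to $k$-sets fit inside $[n]$. This is precisely where the standing hypotheses $2k\le n$ and $|G|,|H|\le k$ get used. The degenerate cases --- a vanishing $\mu_G(\ell)$ or $\mu_H(\ell)$ in the condition, and $G=H$ (where a ``disjoint pair'' still means two distinct $k$-sets) --- should each cost only a line.
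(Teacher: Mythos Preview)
The paper does not actually prove this statement: Theorem~1.1 is quoted from \cite{TTT} and used as a black box throughout, so there is no in-paper argument to compare yours against.

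Your proof is correct. The prefix-count equivalence $S\preceq G \iff |S\cap[\ell]|\ge\mu_G(\ell)$ for all $\ell$ is Proposition~2.1(i) together with its easy converse; the ``if'' direction is then the expected pigeonhole inside $[\ell_0]$. The Hall construction for the contrapositive is also sound: the hypothesis is exactly that the merged sorted multiset of bounds dominates $(1,2,\dots,r+s)$, which gives a perfect matching into $[r+s]$; the rearrangement step $a_i\le g_i$ holds because the matched values at $G$-slots $1,\dots,i$ already provide $i$ distinct elements not exceeding $g_i$; and the padding into $\{r+s+1,\dots,2k\}\subseteq[n]$ uses precisely the standing hypotheses $2k\le n$ and $|G|,|H|\le k$, with the boundary cases $r=k$ or $s=k$ simply collapsing one padding block to the empty set. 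For context, the paper's own arguments in the surrounding lemmas (Lemma~2.2, Lemma~2.3(ii)) proceed by direct case analysis on ranges of $\ell$ over the $\mu$-counts rather than via a matching, so your route is somewhat more structural than the ambient style of the paper.
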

Thus, $G \sim_{si} H$ if and only if there exists an integer $1 \le \ell \le n$ such that $\mu_{G}(\ell) + \mu_{H}(\ell) > \ell$. In this case, we say that $G$ and $H$ are strongly intersecting at $\ell$ and we write $G \sim_{si} H$ at $\ell$. Conversely, we say that $G$ and $H$ are \emph{not strongly intersecting} if, for all $1 \le \ell \le n$, we have $\mu_{G}(\ell) + \mu_{H}(\ell) \le \ell$; equivalently, there exist $S \le G$ and $T \le H$ such that $S \cap T = \emptyset$.\\
Our aim is to characterize MLCIFs, which in turn enables the construction such families. In \cite{B}, Baber constructed MLCIFs by extending each maximal family on $[2k]$ to a maximal family on $[n]$.

\begin{theorem}(Baber \cite{B}) Let $\mathcal{A}\subseteq \binom{[2k]}{k}$ be a maximal left-compressed intersecting family. Then, $\mathcal{A}$ extends uniquely to a maximal left-compressed intersecting subfamily of $\binom{[n]}{k}$. Moreover, every maximal left-compressed intersecting subfamily of $\binom{[n]}{k}$ arises in this way. 
\end{theorem}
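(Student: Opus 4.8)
\emph{Approach.} The plan is to exhibit mutually inverse maps between the MLCIFs of $\binom{[2k]}{k}$ and those of $\binom{[n]}{k}$: restriction, $\mathcal{B}\mapsto\mathcal{B}\cap\binom{[2k]}{k}$, one way, and $\mathcal{A}\mapsto\mathcal{A}^{\perp}$ the other, where for any family $\mathcal{X}$ of $k$-sets I set $\mathcal{X}^{\perp}:=\{S\in\binom{[n]}{k}:S\sim_{si}A\text{ for every }A\in\mathcal{X}\}$; Baber's theorem then follows, $\mathcal{A}^{\perp}$ being the unique MLCIF of $\binom{[n]}{k}$ restricting to $\mathcal{A}$. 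The engine is one elementary ``Shrinking Lemma'': if $B,S\in\binom{[n]}{k}$ with $B\cap S=\emptyset$, then the $\le$-least $k$-subset $C$ of $[n]\setminus S$ satisfies $C\le B$ and $C\subseteq[2k]$ --- the first because $B$ is itself a $k$-subset of $[n]\setminus S$, the second because among $1,\dots,2k$ at most $k$ members belong to $S$, so the $k$-th smallest element of $[n]\setminus S$ is at most $2k$. The same profile computation shows, more generally, that if $A\not\sim_{si}S$ --- equivalently $\mu_{A}(\ell)+\mu_{S}(\ell)\le\ell$ for all $\ell$, by the cited criterion --- then that least $k$-subset $C$ already satisfies $C\le A$. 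I also use the ``window'' remark that $\mu_{A}(\ell)+\mu_{B}(\ell)\le 2k\le\ell$ for $\ell\ge 2k$, so $\sim_{si}$ involves only $\ell\le 2k-1$; in particular, for $k$-sets contained in $[2k]$ the relation $\sim_{si}$ is the same whether read in $\binom{[2k]}{k}$ or in $\binom{[n]}{k}$, which lets me pass between ground sets freely.

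\emph{Key steps, in order.} (1) I would first show that a left-compressed $\mathcal{L}\subseteq\binom{[n]}{k}$ is a maximal intersecting family if and only if $\mathcal{L}=\mathcal{L}^{\perp}$: the set $\mathcal{L}^{\perp}$ is automatically left-compressed since $\sim_{si}$ is downward closed in each argument, and the substance is that if $A\in\mathcal{L}$ and $A\not\sim_{si}S$ then the Shrinking Lemma yields $C\le A$ (hence $C\in\mathcal{L}$) with $C\cap S=\emptyset$, which simultaneously destroys intersectingness when $S\in\mathcal{L}$ and blocks the addition of $S$ when $S\notin\mathcal{L}$. (2) Next, a left-compressed $\mathcal{L}\subseteq\binom{[n]}{k}$ is intersecting iff $\mathcal{L}\cap\binom{[2k]}{k}$ is: from disjoint $S,T\in\mathcal{L}$, two applications of the Shrinking Lemma produce disjoint $S'\le S$ and $T'\le T$ lying in $[2k]$, hence in $\mathcal{L}\cap\binom{[2k]}{k}$. (3) From (1)--(2), restriction carries MLCIFs of $\binom{[n]}{k}$ to MLCIFs of $\binom{[2k]}{k}$: left-compressedness and intersectingness are immediate, and maximality holds because a $k$-set $S\subseteq[2k]$ omitted by $\mathcal{L}$ is disjoint from some $B\in\mathcal{L}$, which the Shrinking Lemma replaces by a $C\in\mathcal{L}\cap\binom{[2k]}{k}$ still disjoint from $S$. (4) For $\mathcal{A}$ an MLCIF of $\binom{[2k]}{k}$, I would check $\mathcal{A}^{\perp}$ is an MLCIF of $\binom{[n]}{k}$: it is left-compressed and contains $\mathcal{A}$ (the members of $\mathcal{A}$ being pairwise strongly intersecting); applying (1) on the ground set $[2k]$ together with the window remark gives $\mathcal{A}^{\perp}\cap\binom{[2k]}{k}=\mathcal{A}$; intersectingness of $\mathcal{A}^{\perp}$ then follows from (2), and maximality exactly as in (1). (5) Finally the two composites are the identity: $\mathcal{A}^{\perp}\cap\binom{[2k]}{k}=\mathcal{A}$ is part of (4), and for an MLCIF $\mathcal{B}$ of $\binom{[n]}{k}$ one has $(\mathcal{B}\cap\binom{[2k]}{k})^{\perp}=\mathcal{B}$ --- the inclusion $\supseteq$ because $\mathcal{B}\cap\binom{[2k]}{k}\subseteq\mathcal{B}$ forces $\mathcal{B}=\mathcal{B}^{\perp}\subseteq(\mathcal{B}\cap\binom{[2k]}{k})^{\perp}$, and $\subseteq$ by the disjointness-witness argument of (3). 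Uniqueness in the statement is then formal: any MLCIF of $\binom{[n]}{k}$ containing $\mathcal{A}$ restricts to an intersecting family containing the maximal family $\mathcal{A}$, so it restricts to exactly $\mathcal{A}$, and hence equals $\mathcal{A}^{\perp}$ by (5).

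\emph{Main obstacle.} I expect step (4) to be the crux: that $\mathcal{A}^{\perp}$ is genuinely intersecting and maximal. This is false for a general left-compressed intersecting $\mathcal{L}$ --- one can have $\mathcal{L}\subsetneq\mathcal{L}^{\perp}$ with $\mathcal{L}^{\perp}$ not intersecting --- so the maximality of $\mathcal{A}$ on $[2k]$ must genuinely be used, and it enters precisely via the identity $\mathcal{A}^{\perp}\cap\binom{[2k]}{k}=\mathcal{A}$ feeding into step (2). The delicate part is to keep $\perp$ computed consistently over the two ground sets (this is what the window remark secures) and to apply the Shrinking Lemma each time with the correct set in the role of ``$S$''; beyond that, the argument is formal manipulation of the order-reversing Galois connection $\mathcal{L}\mapsto\mathcal{L}^{\perp}$ and its fixed points.
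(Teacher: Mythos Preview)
The paper does not prove this statement: Theorem~1.2 is quoted from Barber~\cite{B} as background and motivation, and the paper then pursues its own, different characterization of MLCIFs via principal generating sets (Theorem~1.3). There is therefore no ``paper's own proof'' to compare against.

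That said, your argument is sound. The Shrinking Lemma and its extension are correct: if $\mu_{A}(\ell)+\mu_{S}(\ell)\le\ell$ for all $\ell$, then at $\ell=a_{i}$ one gets $|[a_{i}]\setminus S|=a_{i}-\mu_{S}(a_{i})\ge i$, so the $i$-th smallest element of $[n]\setminus S$ is at most $a_{i}$. The window remark is fine since $\mu_{A}(\ell)+\mu_{B}(\ell)\le 2k$ always. Steps (1)--(5) then go through as you outline, and the Galois-connection framing via $\mathcal{L}\mapsto\mathcal{L}^{\perp}$ makes the bijection transparent. The one place to be explicit is in the uniqueness clause: you use that ``maximal'' in MLCIF means maximal \emph{intersecting} (not merely maximal among left-compressed intersecting families), which is indeed the paper's convention, so that any intersecting family in $\binom{[2k]}{k}$ containing $\mathcal{A}$ must equal $\mathcal{A}$.

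By way of comparison, the paper's own Theorem~1.3 gives a structurally different bijection --- between MLCIFs and principal generating sets in $\mathcal{G}_{k}(\overline{1})$ --- and recovers the $[2k]$-restriction phenomenon only implicitly through the bound $g_{r}\le k+r-1\le 2k-1$ on generators. Your approach is more direct for Barber's statement itself and avoids the generator machinery entirely.
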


In this paper, we present an alternative way to construct MLCIFs based on their generating sets. We introduce the following definitions. First, consider the sets $A^{1},\ldots,A^{m} \in 2^{[n]}$. For each $j \in [m]$, we denote $r_{j}=|A^{j}|$ and $d=\max\{r_{j}\}$. Suppose $A^{j}=\{a^{j}_{1},\ldots,a^{j}_{r_{j}}\}$. We define the set $$A=A^{1}\wedge A^{2}\wedge \ldots \wedge A^{m}=\{a_{1},\ldots,a_{d}\}$$
where $a_{i}=min\{a^{j}_{i}: 1 \leq j \leq m\}$ with convention $a^{j}_{i}=+\infty$ if $i > r_{j}$\\
Next, as we shall prove in Lemma 2.4, if $\mathcal{F}(\mathcal{G})$ is an MLCIF, then every $G \in \mathcal{G}$ satisfies $|G \cap [k+|G|-1]|=|G|$; in other words, if $G=\{g_{1},\ldots,g_{r}\}$, then $g_{r} \le k+r-1$. This motivates the following definition.
$$G_{k}=\left \{G \in \binom{[2k-1]}{\leq k}: |G \cap [k+|G|-1]|=|G| \right \}$$
We also define $\mathcal{G}_{k}(1)=\{ G \in \mathcal{G}_{k}: 1 \in G\}$ and $\mathcal{G}_{k}(\overline{1})= \{ G \in \mathcal{G}_{k}: 1 \notin G\}$.
We call a subfamily $\mathcal{G} \subseteq \mathcal{G}_{k}(\overline{1})$ a \emph{principal generating set } if $\mathcal{G}$ are strongly intersecting.  (abbreviated as PGS ). Now, let $\mathcal{G}=\{G^{1},\ldots,G^{m}\} \subseteq \mathcal{G}_{k}(\bar{1})$ be an PGS. For every $j \in [m]$, let $r_{j} = |G^{j}|$ and write 
$G^{j} = \{ g^{j}_{1}, \ldots, g^{j}_{r_{j}} \}$. For each $i_{j} \in [r_{j}]$, we define 
$G^{j}_{i_{j}} = \{1\} \cup [i_{j}+1,\, g^{j}_{i_{j}}]$. Finally, for $(i_{1},\ldots,i_{m}) \in ([r_{1}]\times \ldots \times [r_{m}]]$, we define
$$\mathcal{H}(\mathcal{G})=\left\{H: H \le  G^{1}_{i_{1}}\wedge\cdots \wedge G^{m}_{i_{m}}, \text{for some } (i_{1},\ldots,i_{m}) \in [r_{1}]\times \ldots \times [r_{m}] \right\} $$
We can now state our main result. 
\begin{theorem} Let $\mathcal{G}$ be a PGS. Then the family $\mathcal{F}(\mathcal{G}) \cup \mathcal{F}(\mathcal{H}(\mathcal{G}))$ is an MLCIF.
Conversely, for every MLCIF $\mathcal{A} \subseteq \binom{[n]}{k}$, there exists a PGS $\mathcal{G}$ such that $\mathcal{A}=\mathcal{F}(\mathcal{G}) \cup \mathcal{F}( \mathcal{H}(\mathcal{G}))$.
\end{theorem}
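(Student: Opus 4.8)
The plan is to prove the forward implication first and then deduce the converse from it together with Lemma 2.4. For the forward implication, fix a PGS $\mathcal{G}=\{G^{1},\dots,G^{m}\}$ and set $\mathcal{A}=\mathcal{F}(\mathcal{G})\cup\mathcal{F}(\mathcal{H}(\mathcal{G}))=\mathcal{F}(\mathcal{G}\cup\mathcal{H}(\mathcal{G}))$. Left-compressedness is immediate, since each $\mathcal{F}(\cdot)$ is left-compressed and a union of left-compressed families is. For the intersecting property I will apply Theorem 1.1: it suffices to show every pair from $\mathcal{G}\cup\mathcal{H}(\mathcal{G})$ is strongly intersecting. Pairs within $\mathcal{G}$ are, by the PGS hypothesis. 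Every $H\in\mathcal{H}(\mathcal{G})$ contains $1$ (each $G^{j}_{i_{j}}$ does, hence so does every wedge $G^{1}_{i_{1}}\wedge\dots\wedge G^{m}_{i_{m}}$ and anything $\le$ it), so pairs within $\mathcal{H}(\mathcal{G})$ are strongly intersecting at $\ell=1$. The one genuine case is a pair $G^{j}\in\mathcal{G}$, $H\in\mathcal{H}(\mathcal{G})$ with $H\le G^{1}_{i_{1}}\wedge\dots\wedge G^{m}_{i_{m}}$: taking a wedge and then passing below it only increases $\mu_{\bullet}(\ell)$ at every $\ell$, so $\mu_{H}(\ell)\ge\mu_{G^{j}_{i_{j}}}(\ell)$ and it is enough to check $G^{j}\sim_{si}G^{j}_{i_{j}}$, which I will do at $\ell=g^{j}_{i_{j}}$: there $\mu_{G^{j}}(\ell)=i_{j}$ and, since all elements of $G^{j}_{i_{j}}=\{1\}\cup[i_{j}+1,g^{j}_{i_{j}}]$ lie in $[\ell]$, $\mu_{G^{j}_{i_{j}}}(\ell)=g^{j}_{i_{j}}-i_{j}+1$, so the sum is $g^{j}_{i_{j}}+1>\ell$.

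Maximality is the substantial part. For a $k$-set $C$ write $\tau(C)$ for the set of the $k$ smallest elements of $[n]\setminus C$ (well defined as $n\ge 2k$); it is disjoint from $C$, and because $\mathcal{A}$ is left-compressed, $\mathcal{A}$ has a member disjoint from $C$ if and only if $\tau(C)\in\mathcal{A}$. Hence $\mathcal{A}$ is maximal exactly when $\tau(C)\in\mathcal{A}$ for every $k$-set $C\notin\mathcal{A}$. The crux, which I will establish by an explicit choice of witnessing indices, is: \emph{if $A$ is a $k$-set with $1\in A$ and $\tau(A)\not\preceq G^{j}$ for every $j$, then $A\in\mathcal{F}(\mathcal{H}(\mathcal{G}))$; symmetrically, if $C$ is a $k$-set with $1\notin C$ and $C\not\preceq G^{j}$ for every $j$, then $\tau(C)\in\mathcal{F}(\mathcal{H}(\mathcal{G}))$.} For the first part: for each $j$ pick $i_{j}\in[r_{j}]$ with $(\tau(A))_{i_{j}}>g^{j}_{i_{j}}$; since $(\tau(A))_{i_{j}}$ is the $i_{j}$-th smallest non-element of $A$, at most $i_{j}-1$ non-elements of $A$ lie in $[g^{j}_{i_{j}}]$, so $|A\cap[g^{j}_{i_{j}}]|\ge g^{j}_{i_{j}}-i_{j}+1=:L_{j}$, i.e.\ $a_{L_{j}}\le g^{j}_{i_{j}}$; as $a_{1}<\dots<a_{L_{j}}$ this forces $a_{l}\le i_{j}+l-1=(G^{j}_{i_{j}})_{l}$ for $2\le l\le L_{j}$, while $a_{1}=1=(G^{j}_{i_{j}})_{1}$, so $A\preceq G^{j}_{i_{j}}$; doing this for all $j$ gives $A\preceq G^{1}_{i_{1}}\wedge\dots\wedge G^{m}_{i_{m}}$, hence $A\in\mathcal{F}(\mathcal{H}(\mathcal{G}))$. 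The second part is the same computation, now deriving $|[g^{j}_{i_{j}}]\setminus C|\ge L_{j}$ from $c_{i_{j}}>g^{j}_{i_{j}}$. Granting this, let $C\notin\mathcal{A}$: if $1\in C$ then $C\notin\mathcal{F}(\mathcal{H}(\mathcal{G}))$, so the contrapositive of the first part gives $\tau(C)\preceq G^{j}$ for some $j$, i.e.\ $\tau(C)\in\mathcal{F}(\mathcal{G})\subseteq\mathcal{A}$; if $1\notin C$ then $C\notin\mathcal{F}(\mathcal{G})$, so $C\not\preceq G^{j}$ for all $j$, and the second part gives $\tau(C)\in\mathcal{F}(\mathcal{H}(\mathcal{G}))\subseteq\mathcal{A}$. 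Either way $\tau(C)\in\mathcal{A}$, so $\mathcal{A}$ is maximal.

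For the converse, let $\mathcal{A}$ be an MLCIF with non-trivial part $\mathcal{A}'=\{A\in\mathcal{A}:1\notin A\}$. By Lemma 2.4, applied to a reduced generating set of $\mathcal{A}$, the non-trivial members of $\mathcal{A}$ are generated by some $\mathcal{G}\subseteq\mathcal{G}_{k}(\overline{1})$, i.e.\ $\mathcal{A}'=\{A\in\binom{[n]}{k}: 1\notin A,\ A\preceq G\text{ for some }G\in\mathcal{G}\}$ (no non-trivial set is $\preceq$ a generator containing $1$, so only the generators without $1$ are relevant here). For $G\in\mathcal{G}$, padding $G$ with the top $k-|G|$ elements of $[n]$ gives $\widehat{G}\in\mathcal{A}'$ with $\mathcal{F}(G)=\mathcal{L}(\widehat{G})\subseteq\mathcal{A}$; thus $\mathcal{F}(\mathcal{G})\subseteq\mathcal{A}$, so $\mathcal{F}(\mathcal{G})$ is intersecting and Theorem 1.1 shows $\mathcal{G}$ is strongly intersecting, i.e.\ $\mathcal{G}$ is a PGS. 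By the forward implication $\mathcal{B}:=\mathcal{F}(\mathcal{G})\cup\mathcal{F}(\mathcal{H}(\mathcal{G}))$ is an MLCIF. Any $A\in\mathcal{F}(\mathcal{H}(\mathcal{G}))$ contains $1$ and, $\mathcal{B}$ being intersecting, meets every member of $\mathcal{F}(\mathcal{G})\supseteq\mathcal{A}'$; hence it meets every member of $\mathcal{A}$ (sharing $1$ with the trivial ones, and meeting those in $\mathcal{A}'$), so maximality of $\mathcal{A}$ forces $A\in\mathcal{A}$. Therefore $\mathcal{B}\subseteq\mathcal{A}$, and since $\mathcal{B}$ is a maximal intersecting family contained in the intersecting family $\mathcal{A}$, we get $\mathcal{B}=\mathcal{A}$.

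The main obstacle, as indicated, is the maximality step of the forward implication: finding the witnessing indices $i_{j}$ that simultaneously certify $A\preceq G^{j}_{i_{j}}$ for all $j$, and confirming that $A\preceq G^{1}_{i_{1}}\wedge\dots\wedge G^{m}_{i_{m}}$ then follows. Everything else — left-compressedness, the Theorem 1.1 verifications, and the reduction of maximality to the property of $\tau$ — is routine manipulation of the orders $\le$ and $\preceq$ and of the functions $\mu_{\bullet}$.
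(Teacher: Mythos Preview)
Your proof is correct and takes a genuinely different route from the paper's in two places.

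For maximality, the paper argues by contradiction: it takes $S$ in a larger LCIF $\mathcal{B}\supsetneq\mathcal{A}$ and splits on whether $1\in S$. When $1\in S$ it invokes Lemma~2.2 (the $g_{p}=h_{q}=p+q-1$ lemma) to produce indices with $S\preceq G^{j}_{i_{j}}$. When $1\notin S$ it carries out a lengthy structural analysis: it prunes redundant factors from the wedge $G=G^{1}_{i_{1}}\wedge\cdots\wedge G^{m}_{i_{m}}$, rewrites $G$ explicitly as a disjoint union of intervals, and then verifies $\mu_{S}(\ell)+\mu_{G}(\ell)\le\ell$ for every $\ell$ by matching each $\ell$ to some factor $G^{j}_{i_{j}}$ and appealing to Lemma~2.3(ii). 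Your $\tau$-operator argument sidesteps all of this: the criterion ``$\mathcal{A}$ is maximal iff $\tau(C)\in\mathcal{A}$ for every $C\notin\mathcal{A}$'' reduces both cases to the same pigeonhole count $|A\cap[g^{j}_{i_{j}}]|\ge g^{j}_{i_{j}}-i_{j}+1$, and then Lemma~2.4(ii) assembles the wedge. This is shorter and more symmetric; the paper's approach, on the other hand, yields the explicit block decomposition of the wedge as a by-product.

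For the converse, the paper works directly: it defines a truncation map $\pi$ on maximal elements of $\mathcal{A}$, proves $\pi(\mathcal{A}^{*})$ is strongly intersecting via a case analysis on $\ell$, and then shows the part of $\pi(\mathcal{A}^{*})$ containing $1$ already lies in $\mathcal{H}(\mathcal{G})$. You instead bootstrap from the forward direction: having extracted a PGS $\mathcal{G}$ generating $\mathcal{A}'$, you know $\mathcal{B}=\mathcal{F}(\mathcal{G})\cup\mathcal{F}(\mathcal{H}(\mathcal{G}))$ is already an MLCIF, show $\mathcal{B}\subseteq\mathcal{A}$ (each $A\in\mathcal{F}(\mathcal{H}(\mathcal{G}))$ meets all of $\mathcal{A}$, forcing $A\in\mathcal{A}$ by maximality), and conclude $\mathcal{B}=\mathcal{A}$ by maximality of $\mathcal{B}$. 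This is cleaner and reuses work already done. One small correction: the lemma you invoke for ``generators of an MLCIF lie in $\mathcal{G}_{k}$'' is Lemma~2.3(i), not Lemma~2.4.
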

Theorem 1.3 allows us to conclude that there is a one-to-one correspondence between the sets of PGS $\mathcal{G}$ and the sets of MLCIF $\mathcal{A} \subseteq \binom{[n]}{k}$. This means that the number of MLCIFs is exactly the same as the number of PGSs $\mathcal{G} \subseteq \mathcal{G}_{k}(\overline{1})$. Theorem 1.3 further states that for any MLCIF  $\mathcal{A} \subseteq \binom{[n]}{k}$, there exists a unique PGS $\mathcal{G} \subseteq \mathcal{G}_{k}(\overline{1})$ such that $\mathcal{A}=\mathcal{F}(\mathcal{G})\cup \mathcal{F}(\mathcal{H}(\mathcal{G}))$. We say that $\mathcal{A}$ \textit{is determined by $\mathcal{G}$} and \textit{generating set } of $\mathcal{A}$ is precisely $\mathcal{G}\cup \mathcal{H}(\mathcal{G})$. Each $G \in \mathcal{G}\cup \mathcal{H}(\mathcal{G})$ is called a \textit{generator } of $\mathcal{A}$. A generator $G$ of a generating set $\mathcal{G}$ is said to be \emph{ maximal in $\mathcal{G}$} if there is no $G' \in \mathcal{G}$, $G' \ne G$, such that $G \preceq G'$.
We say that an MLCIF $\mathcal{A}$ has $r$ maximal generators if the family $\mathcal{G}\cup \mathcal{H}(\mathcal{G})$ has exactly $r$ maximal generators. We also define the \textit{rank } of $\mathcal{A}$ as the minimum size of a generator of $\mathcal{A}$.\\ \\
Another topic addressed in our paper is the problem of intersecting a family of sets with a given set $X$. For a family $\mathcal{A} \subseteq 2^{[n]}$ and a set $X \subseteq [n]$, we define 
\[
\mathcal{A}(X) = \{ A \in \mathcal{A} : A \cap X \neq \emptyset \}
\quad \text{and} \quad
\mathcal{A}_{0}(X) = \{ A \in \mathcal{A} : A \cap X = \emptyset \}.
\]
We know that one of the most fundamental theorems on intersecting families is the Erdős–Ko–Rado theorem.
\begin{theorem} (Erd\H{o}s-Ko-Rado \cite{EKR}) Let $n \geq 2k$ and let $\mathcal{A} \subseteq\binom{[n]}{k}$ be an intersecting family. Then $|\mathcal{A}| \leq |\mathcal{S}|$.
\end{theorem}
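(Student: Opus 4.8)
The plan is to deduce the bound from the compression (shifting) technique, reducing to the left-compressed case and then inducting on $n$, which keeps everything inside the framework developed above.

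\textbf{Step 1 (reduction to left-compressed families).} For $1 \le i < j \le n$ apply the shift $S_{ij}$, which replaces each $A \in \mathcal{A}$ with $j \in A$, $i \notin A$ by $(A \setminus \{j\}) \cup \{i\}$ whenever that set is not already present. A routine verification (see \cite{Fr87}) gives $|S_{ij}(\mathcal{A})| = |\mathcal{A}|$ and that $S_{ij}(\mathcal{A})$ is again intersecting. Since $\sum_{A \in \mathcal{A}} \sum_{a \in A} a$ is a positive integer that strictly decreases whenever a shift actually alters the family, iterating the $S_{ij}$ over all pairs terminates at a family $\mathcal{B}$ with $|\mathcal{B}| = |\mathcal{A}|$ that is intersecting and stable under every $S_{ij}$. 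Stability under all $S_{ij}$ is exactly the statement that $\mathcal{B}$ is left-compressed in the sense defined above: any $B \le A$ is reached from $A$ by a sequence of single-coordinate downward replacements $a_s \mapsto a_s'$ with $a_s' < a_s$, $a_s' \notin A$. Hence it suffices to show $|\mathcal{B}| \le |\mathcal{S}| = \binom{n-1}{k-1}$ for every left-compressed intersecting $\mathcal{B} \subseteq \binom{[n]}{k}$.

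\textbf{Step 2 (induction on $n$; base case $n = 2k$).} An intersecting family cannot contain both a $k$-set and its complement, so partitioning $\binom{[2k]}{k}$ into $\tfrac12\binom{2k}{k}$ complementary pairs yields $|\mathcal{B}| \le \tfrac12\binom{2k}{k} = \binom{2k-1}{k-1} = |\mathcal{S}|$. No compression is needed here.

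\textbf{Step 3 (inductive step, $n > 2k$).} Set $\mathcal{B}_1 = \{ A \in \mathcal{B} : n \notin A \} \subseteq \binom{[n-1]}{k}$ and $\mathcal{B}_2 = \{ A \setminus \{n\} : A \in \mathcal{B},\ n \in A \} \subseteq \binom{[n-1]}{k-1}$. Then $\mathcal{B}_1$ is intersecting, and the crucial point is that $\mathcal{B}_2$ is intersecting as well: if $C, C' \in \mathcal{B}_2$ were disjoint, then $|C \cup C'| = 2k-2 < n-1$, so there is $m \in [n-1] \setminus (C \cup C')$; left-compressedness applied to $C' \cup \{n\} \in \mathcal{B}$ (replace $n$ by the smaller $m$) gives $C' \cup \{m\} \in \mathcal{B}$, yet $C \cup \{n\}$ and $C' \cup \{m\}$ are disjoint, contradicting that $\mathcal{B}$ is intersecting. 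Since $n - 1 \ge 2k$ and $n-1 \ge 2(k-1)$, the induction hypothesis gives $|\mathcal{B}_1| \le \binom{n-2}{k-1}$ and $|\mathcal{B}_2| \le \binom{n-2}{k-2}$, so $|\mathcal{B}| = |\mathcal{B}_1| + |\mathcal{B}_2| \le \binom{n-2}{k-1} + \binom{n-2}{k-2} = \binom{n-1}{k-1} = |\mathcal{S}|$, completing the induction and hence the proof.

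The only genuinely delicate step is the claim in Step 3 that $\mathcal{B}_2$ is intersecting: this is precisely where the left-compressed hypothesis (unavailable for a general intersecting family) is used, and it is what justifies the reduction in Step 1. The remaining ingredients — invariance of size and intersection under shifts, the complementation bound, and Pascal's identity — are standard bookkeeping.
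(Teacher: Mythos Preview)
Your proof is correct and is essentially the standard shifting argument due to Frankl (surveyed in \cite{Fr87}). However, the paper does not actually prove Theorem~1.4: it is stated with a citation to \cite{EKR} as a classical background result, and no proof is supplied. So there is no ``paper's own proof'' to compare against here.

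One small remark on your writeup: in Step~3 you invoke the induction hypothesis on $\mathcal{B}_1$ and $\mathcal{B}_2$ as arbitrary intersecting families, which is fine since you are inducting on the full statement of the theorem (not just the left-compressed case). It might be worth saying explicitly that the outer induction is over $n$ for all $k \le n/2$ simultaneously, since $\mathcal{B}_2$ lives in $\binom{[n-1]}{k-1}$ rather than $\binom{[n-1]}{k}$. Alternatively, you could observe that $\mathcal{B}_1$ and $\mathcal{B}_2$ inherit left-compressedness from $\mathcal{B}$ and induct only on the compressed statement; either route works.
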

In \cite{PB}, Borg considered a variant of the Erdős–Ko–Rado theorem.  Borg asked which $X \subseteq [2,n]$ have the property that $|\mathcal{A}(X)| \leq |\mathcal{S}(X)|$ for all MLCIFs $\mathcal{A} \subseteq \binom{[n]}{k}$. Since the Star family has a simple structure, for every $X \subset [2,n]$ with $|X|=d$, it is easy to compute that $|\mathcal{S}(X)|=\binom{n-1}{k-1}-\binom{n-d-1}{k-1}$. There are some investigations concerning the comparison between $\mathcal{A}(X)$ and $\mathcal{S}(X)$. There have been several previous studies on this problem \cite{PB,B,BB,BM}. In \cite{BB}, Bond determined $\mathcal{A}(X)$ in the case when $\mathcal{A}$ is an MLCIF with exactly one maximal generator. The analogous problem in the case when $\mathcal{A}$ has exactly two maximal generators is more complicated. However, we also establish the following result under the additional assumption that $\mathcal{A}$ has rank 2. First, we prove the following theorem in order to characterize the MLCIFs of rank 2 with exactly two maximal generators.
\begin{theorem}
Let $\mathcal{A}=\mathcal{F}(\mathcal{G})$ be an MLCIF with two maximal generators. Then
\[
\mathcal{A}=\mathcal{F}\big([a,b]\big)\cup \mathcal{F}\big(\{1\}\cup[b-a+2,b]\big),
\]
where $\mathcal{G}=\{[a,b],\,\{1\}\cup[b-a+2,b]\}$ with $b>2a-1$.
Furthermore, if $\mathcal{A}$ has rank $2$ then
\[
\mathcal{A}=\mathcal{F}(\{[2,b]\})\cup \mathcal{F}(\{1,b\}),
\]
where $4\le b\le k+1$.
\end{theorem}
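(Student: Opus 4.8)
\emph{Strategy.} The plan is to pass the hypothesis through Theorem 1.3 and then analyze the $\preceq$-maximal elements of the auxiliary family $\mathcal{H}(\cdot)$. By Theorem 1.3 there is a PGS $\mathcal{G}_{0}\subseteq\mathcal{G}_{k}(\overline{1})$ with $\mathcal{A}=\mathcal{F}(\mathcal{G}_{0})\cup\mathcal{F}(\mathcal{H}(\mathcal{G}_{0}))$, so the generating set of $\mathcal{A}$ is $\mathcal{D}=\mathcal{G}_{0}\cup\mathcal{H}(\mathcal{G}_{0})$; since $D\preceq D'$ implies $\mathcal{F}(D)\subseteq\mathcal{F}(D')$, only the $\preceq$-maximal elements of $\mathcal{D}$ matter, and $\mathcal{A}=\mathcal{F}(M_{1})\cup\mathcal{F}(M_{2})$ for the two maximal generators $M_{1},M_{2}$. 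Every member of $\mathcal{G}_{0}$ omits $1$ and every member of $\mathcal{H}(\mathcal{G}_{0})$ contains $1$; comparing first entries shows no member of $\mathcal{G}_{0}$ is $\preceq$ a member of $\mathcal{H}(\mathcal{G}_{0})$, so the maximal generators omitting $1$ are precisely the $\preceq$-maximal elements of $\mathcal{G}_{0}$. I would first rule out the two degenerate splits. If both $M_{i}$ contain $1$ then $\mathcal{A}\subseteq\mathcal{S}$ (as $1\in M_{i}$ forces $1$ into every $S\preceq M_{i}$), so $\mathcal{A}=\mathcal{S}$ by maximality; but $\mathcal{S}=\mathcal{F}(\{1\})$ has a single maximal generator, a contradiction. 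If both $M_{i}$ omit $1$, then $\mathcal{G}_{0}$ has at least two $\preceq$-maximal elements, and one shows $\mathcal{H}(\mathcal{G}_{0})$ then supplies a further maximal generator (for instance a $\preceq$-maximal element among the meets $G^{1}_{r_{1}}\wedge\cdots\wedge G^{m}_{r_{m}}$ of the top-index sets, which one checks is not $\preceq$ any member of $\mathcal{G}_{0}$), giving at least three maximal generators. Hence $\mathcal{G}_{0}=\{G\}$ is a single self strongly-intersecting set $G=\{g_{1},\dots,g_{r}\}\in\mathcal{G}_{k}(\overline{1})$, with $r\ge 2$ since a one-point set in $\mathcal{G}_{k}(\overline{1})$ is never self strongly-intersecting.

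Next I would compute $\mathcal{H}(\{G\})=\bigcup_{i=1}^{r}\mathcal{L}(G_{i})$, where $G_{i}=\{1\}\cup[i+1,g_{i}]$, and find its $\preceq$-maximal elements. A direct check gives $G_{i}\preceq G_{i+1}$ exactly when $g_{i+1}=g_{i}+1$, and $G_{i+1}\preceq G_{i}$ never; so the $\preceq$-maximal elements of $\mathcal{H}(\{G\})$ are $G_{r}$ together with $G_{i}$ for each index $i$ with $g_{i+1}>g_{i}+1$. As $G$ omits $1$ it is $\preceq$ no $G_{i}$, so the number of maximal generators of $\mathcal{A}$ equals $1$ plus the number of these maximal $G_{i}$ with $G_{i}\not\preceq G$. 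Forcing this to equal $2$ should pin down that: (a) $G$ has no gap, i.e.\ $G=[a,b]$ is an interval --- the point being that if $G$ had a gap then the self strongly-intersecting condition on $G$ (equivalently, $g_{p}\le 2p-1$ for some $p$) is incompatible with the size and coordinate inequalities a gap-produced $G_{i}$ would need in order to satisfy $G_{i}\preceq G$, so at least two of the maximal elements survive; and (b) for $G=[a,b]$ the surviving maximal element is $G_{r}=\{1\}\cup[b-a+2,b]$, and $G_{r}\not\preceq G$ unwinds to $b>2a-1$. For the converse one checks directly that any interval $[a,b]$ with $a\ge 2$, $b>2a-1$ and $b-a+1\le k$ is a PGS consisting of one self strongly-intersecting set and produces exactly the two maximal generators $[a,b]$ and $\{1\}\cup[b-a+2,b]$; this gives $\mathcal{A}=\mathcal{F}([a,b])\cup\mathcal{F}(\{1\}\cup[b-a+2,b])$, the first assertion.

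For the ``furthermore'' part, every member of $\mathcal{H}(\{[a,b]\})$ has size $\abs{G_{i}}=g_{i}-i+1=a$, whereas $\abs{[a,b]}=b-a+1>a$ (using $b>2a-1$), so the minimum size of a generator of $\mathcal{A}$ --- its rank --- is $a$. Thus rank $2$ forces $a=2$, whence $b>3$, i.e.\ $b\ge 4$; and since any generator has size at most $k$, $b-1=\abs{[2,b]}\le k$, i.e.\ $b\le k+1$. Substituting $a=2$ gives $\{1\}\cup[b-a+2,b]=\{1\}\cup[b,b]=\{1,b\}$, so $\mathcal{A}=\mathcal{F}(\{[2,b]\})\cup\mathcal{F}(\{1,b\})$ with $4\le b\le k+1$.

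The main obstacle I anticipate is step (a) above --- showing that the ``exactly two maximal generators'' hypothesis forces the non-star generator $G$ to be an interval --- and the closely related claim in the first paragraph that a PGS containing two $\preceq$-incomparable generators spawns an extra maximal generator inside $\mathcal{H}(\mathcal{G}_{0})$. Both rest on an exact description of which sets become $\preceq$-maximal in the meet-closure $\mathcal{H}(\mathcal{G}_{0})$ and on how the self strongly-intersecting condition constrains the shape of a generator; once that is in hand, the remaining steps are routine computations with the orders $\le$ and $\preceq$ and with generator sizes.
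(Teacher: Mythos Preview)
Your proposal follows essentially the same approach as the paper: reduce via Theorem 1.3 to a single non-star generator $G$, argue that a gap $g_{i+1}\ge g_i+2$ would force $\mathcal{H}(\{G\})$ to have two incomparable maximal elements, deduce $G=[a,b]$, and then rule out $b\le 2a-1$ and read off the rank-$2$ case. The paper is terser about why the two maximal generators must split as one in $\mathcal{G}$ and one in $\mathcal{H}(\mathcal{G})$ (it essentially asserts this from non-triviality), whereas you treat that step more carefully; the ``gap forces an extra maximal generator'' step you flag as the main obstacle is exactly the crux in the paper's argument as well.
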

Next, we present the following result concerning the intersection of a rank-$2$ MLCIF with exactly two maximal generators and a set $X \subseteq [2,n]$. Although our results do not cover general rank-2 MLCIFs, they yield explicit expressions for the measure of $\mathcal{A}(X)$. See \cite{BM} for related discussions. 
\begin{theorem} Let $\mathcal{A} = \mathcal{F}([2,b]) \cup \mathcal{F}(1,b)$ with $4 \le b \le k+1$ be an MLCIF of rank $2$ having exactly two maximal generators, and let $X \subseteq [2,n]$ with $|X| = d$. Then:

\begin{enumerate}[(i)]
  \item If $X \cap [2,b] \neq \emptyset$, then
  \[
  |\mathcal{A}(X)| =
  \binom{n-1}{k-1}
  - \binom{n-b}{k-1}
  + \binom{n-b}{k-b+1}
  - \binom{n-d-1}{k-1}
  + \binom{n-d-b+\mu_X(b)}{k-1}.
  \]
  In particular, if $X \subseteq [2,b]$, then
  \[
  |\mathcal{A}(X)| = |\mathcal{S}(X)| + \binom{n-b}{k-b+1};
  \]
  and if $X \cap [2,b] \subsetneq X$, then for sufficiently large $n$ we have
  \[
  |\mathcal{A}(X)| < |\mathcal{S}(X)|.
  \]

  \item If $X \cap [2,b] = \emptyset$, then
  \[
  |\mathcal{A}(X)| =
  \binom{n-1}{k-1}
  - \binom{n-b}{k-1}
  + \binom{n-b}{k-b+1}
  - \binom{n-d-1}{k-1}
  + \binom{n-d-b}{k-1}
  - \binom{n-d-b}{k-b+1}.
  \]
  Consequently,
  \[
  |\mathcal{A}(X)| \le |\mathcal{S}(X)|.
  \]
\end{enumerate}
\end{theorem}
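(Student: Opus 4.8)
The plan is to make the two generators of $\mathcal{A}$ completely explicit, decompose both $\mathcal{A}(X)$ and $\mathcal{S}(X)$ into two blocks sharing a common part, and then both count the blocks exactly (for the formulas) and compare them directly (for the inequalities). First I would record the set‑theoretic shape of the generators. Since $S\preceq G$ means $s_i\le g_i$ for all $i\le|G|$, and since ``$s_i\le i+1$ for all $i\le b-1$'' is equivalent to ``$|S\cap[b]|\ge b-1$'', one gets
\[
\mathcal{F}([2,b])=\Bigl\{S\in\binom{[n]}{k}:|[b]\setminus S|\le 1\Bigr\},
\qquad
\mathcal{F}(1,b)=\Bigl\{S\in\binom{[n]}{k}:1\in S,\ S\cap[2,b]\neq\emptyset\Bigr\}.
\]
Write $\mathcal{F}_1=\mathcal{F}([2,b])$ and $\mathcal{F}_2=\mathcal{F}(1,b)$. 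Every $S\in\mathcal{F}_1$ has $|S\cap[2,b]|\ge b-2\ge 2$, so $S\cap[2,b]\neq\emptyset$ automatically; hence $S\in\mathcal{F}_1\setminus\mathcal{F}_2$ iff $1\notin S$, which together with $|[b]\setminus S|\le1$ forces $[b]\setminus S=\{1\}$, i.e.\ $[2,b]\subseteq S$. Therefore $\mathcal{A}=\mathcal{F}_2\sqcup\mathcal{R}$ with $\mathcal{R}=\bigl\{[2,b]\cup T:\ T\in\binom{[b+1,n]}{k-b+1}\bigr\}$, a disjoint union.

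Next, since $X\subseteq[2,n]$, intersecting this decomposition with $X$ gives $\mathcal{A}(X)=\mathcal{F}_2(X)\sqcup\mathcal{R}(X)$, and clearly $\mathcal{F}_2(X)\subseteq\mathcal{S}(X)$. A short inclusion--exclusion over the three defining conditions of $\mathcal{F}_2(X)$ (using $1\notin X$ and $|[2,b]\cup X|=(b-1)+d-\mu_X(b)$) yields
\[
|\mathcal{F}_2(X)|=\binom{n-1}{k-1}-\binom{n-b}{k-1}-\binom{n-d-1}{k-1}+\binom{n-b-d+\mu_X(b)}{k-1}.
\]
For $\mathcal{R}(X)$ there are two cases: if $X\cap[2,b]\neq\emptyset$ then $[2,b]\cup T$ meets $X$ no matter what $T$ is, so $|\mathcal{R}(X)|=\binom{n-b}{k-b+1}$; if $X\cap[2,b]=\emptyset$ then the surviving constraint is $T\cap X\neq\emptyset$ with $X\subseteq[b+1,n]$, so $|\mathcal{R}(X)|=\binom{n-b}{k-b+1}-\binom{n-b-d}{k-b+1}$. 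Adding $|\mathcal{F}_2(X)|+|\mathcal{R}(X)|$ (with $\mu_X(b)=0$ in case (ii)) gives exactly the two displayed formulas; the ``In particular'' identity in (i) is the specialization $\mu_X(b)=d$, in which $\binom{n-b-d+\mu_X(b)}{k-1}=\binom{n-b}{k-1}$ cancels and leaves $|\mathcal{S}(X)|+\binom{n-b}{k-b+1}$.

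For the comparisons I would split $\mathcal{S}(X)=\mathcal{F}_2(X)\sqcup\mathcal{Q}(X)$ analogously, where $\mathcal{Q}(X)=\bigl\{\{1\}\cup T:\ T\in\binom{[b+1,n]}{k-1},\ T\cap X\neq\emptyset\bigr\}$ (a member of $\mathcal{S}(X)\setminus\mathcal{F}_2$ must miss $[2,b]$, and $1\notin X$). Then $|\mathcal{A}(X)|-|\mathcal{S}(X)|=|\mathcal{R}(X)|-|\mathcal{Q}(X)|$. In case (ii), under $S\mapsto S\cap[b+1,n]$ the blocks $\mathcal{R}(X)$ and $\mathcal{Q}(X)$ become precisely the families of $(k-b+1)$‑subsets and of $(k-1)$‑subsets of the $(n-b)$‑element ground set $[b+1,n]$ that meet $X$. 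Since $(k-b+1)+(k-1)=2k-b\le n-b$, a symmetric chain decomposition of the Boolean lattice on $[b+1,n]$ provides a containment‑preserving injection $f$ from level $k-b+1$ into level $k-1$ (every chain through level $k-b+1$ also passes through level $k-1$, exactly because $2k\le n$); as $T\subseteq f(T)$, $f$ maps sets meeting $X$ to sets meeting $X$, so $|\mathcal{R}(X)|\le|\mathcal{Q}(X)|$ and $|\mathcal{A}(X)|\le|\mathcal{S}(X)|$. In case (i) with $X\cap[2,b]\subsetneq X$, put $\delta=d-\mu_X(b)\ge1$; then $|\mathcal{A}(X)|-|\mathcal{S}(X)|=\binom{n-b}{k-b+1}-\bigl(\binom{n-b}{k-1}-\binom{n-b-\delta}{k-1}\bigr)$, and since the bracketed term is a polynomial in $n$ of degree $k-2$ with positive leading coefficient while $\binom{n-b}{k-b+1}$ has degree at most $k-b+1\le k-3$ (here $b\ge4$ is used), the difference is negative for all sufficiently large $n$.

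The step I expect to be the real obstacle is the \emph{unconditional} inequality in part (ii): a term‑by‑term comparison of the two binomial expressions does not work, because the discrete derivative $\binom{m}{k-2}-\binom{m}{k-b}$ is negative for small $m$ (e.g.\ when $k-b\le m<k-2$), and the ``one missing element of $[b]$'' bookkeeping in $\mathcal{F}_1$ interacts awkwardly with the constraint $S\cap X\neq\emptyset$ in several degenerate regimes — when $X\setminus[b]=\emptyset$, or when $d$ is so large that $[b+1,n]\setminus X$ has fewer than $k-1$ elements. Passing to a single containment‑preserving injection between two levels of a Boolean lattice is what makes the comparison go through uniformly; the one numerical inequality to keep an eye on there is $r+r'\le N$, which is exactly the hypothesis $2k\le n$.
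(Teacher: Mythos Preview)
Your proposal is correct and follows a genuinely different route from the paper. For the exact formulas, the paper first establishes $|\mathcal{A}|$ in a separate lemma (Lemma~3.3, via several case splits) and then computes $|\mathcal{A}_0(X)|$ in each case and subtracts; your disjoint decomposition $\mathcal{A}=\mathcal{F}_2\sqcup\mathcal{R}$ lets you read off $|\mathcal{A}(X)|=|\mathcal{F}_2(X)|+|\mathcal{R}(X)|$ directly by one inclusion--exclusion, bypassing Lemma~3.3 entirely. For the large-$n$ inequality in~(i) the paper bounds $\binom{n-d-b+\mu_X(b)}{k-1}\le\binom{n-b-1}{k-1}$ and then shows $\binom{n-b}{k-b+1}<\binom{n-b-1}{k-2}$ via an explicit product/ratio computation; your polynomial-degree comparison ($k-b+1\le k-3<k-2$ since $b\ge4$) reaches the same conclusion in one line. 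The substantive divergence is in part~(ii): the paper telescopes both $\binom{n-b}{k-b+1}-\binom{n-b-d}{k-b+1}$ and $\binom{n-b}{k-1}-\binom{n-b-d}{k-1}$ and compares term by term via $\binom{m}{k-b}\le\binom{m}{k-2}$ using the unimodality Lemma~3.2; as you correctly anticipate in your final paragraph, that termwise inequality can actually fail when $m=n-b-d+j$ is small (for instance when $k-b\le m<k-2$, which does occur for $j$ near $0$ and $n$ close to $2k$), so the paper's verification of ``$n-b-d+j\ge 2k-4$'' is not valid as written. Your symmetric-chain-decomposition injection on $2^{[b+1,n]}$ sidesteps this completely: the single numerical hypothesis $(k-b+1)+(k-1)\le n-b$ is exactly $n\ge 2k$, so every symmetric chain through level $k-b+1$ reaches level $k-1$, and ``meets $X$'' is monotone under containment. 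This is both cleaner and uniformly valid across all the degenerate regimes you flag.
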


Our paper is divided into four main sections. In the first section, we present fundamental concepts and outline the main content of the paper. In the second part, together with some auxiliary lemmas, we present a proof of Theorem 1.3. In the next section,  we will present the proof of Theorem 1.5 and Theorem 1.6 as well as describe some special MLCIFs rank 2. Finally, in the last section, we propose several open questions concerning MLCIFs related to generating sets $\mathcal{G}$.
\section{Proof of Theorem 1.3}
\subsection{Preliminaries}
For any $X \subseteq [n]$ and $1 \le \ell \le n$, we denote by $\mu_{X}(\ell)$ the number of elements of the set $X$ not exceeding $\ell$, that is, $\mu_{X}(\ell)=|X \cap [\ell]|$. Note that $\mu_{X}(\ell)$ is a non-decreasing function of $\ell$. For $\ell \ge 2$, we have $\mu_{X}(\ell) - \mu_{X}(\ell-1) \in \{0,1\}$. The following are some simple properties of $\mu_{X}(\ell)$ with respect to two arbitrary sets $A$ and $B$.

\begin{prop}
For non-empty subsets $X, A, B \subseteq [n]$ and for all $ 1 \le \ell \le n$, we have:
\begin{enumerate}[(i)]
\item If $A \preceq B$, then $\mu_{A}(\ell) \ge \mu_{B}(\ell)$;
\item If $A \le B$, then $\mu_{A}(\ell) \ge \mu_{B}(\ell)$;
\item If $A \subseteq B$, then $\mu_{B}(\ell) \ge \mu_{A}(\ell)$.
\end{enumerate}
\end{prop}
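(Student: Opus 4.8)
The plan is to handle the three parts in increasing order of difficulty, deriving (ii) as an immediate special case of (i).

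First I would dispose of (iii), which is purely set-theoretic. If $A \subseteq B$, then $A \cap [\ell] \subseteq B \cap [\ell]$ for every $\ell$, so taking cardinalities and using monotonicity of size under inclusion gives $\mu_A(\ell) = |A \cap [\ell]| \le |B \cap [\ell]| = \mu_B(\ell)$, as required.

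Next comes the core of the argument, part (i). Write $A = \{a_1, \ldots, a_r\}$ and $B = \{b_1, \ldots, b_s\}$ in ascending order, so that $A \preceq B$ means $r \ge s$ together with $a_i \le b_i$ for all $1 \le i \le s$. Fix $\ell$ and set $t = \mu_B(\ell)$. Because the elements of $B$ are listed increasingly, the relation $\mu_B(\ell) = t$ precisely says that $b_1, \ldots, b_t$ are exactly the elements of $B$ lying in $[\ell]$; in particular $b_t \le \ell$ (the case $t = 0$ being trivial). Since $t \le s \le r$, the indices $1, \ldots, t$ are valid for $A$ as well, and the defining inequalities yield $a_i \le b_i \le b_t \le \ell$ for each $1 \le i \le t$. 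Hence $a_1, \ldots, a_t \in [\ell]$, which forces $\mu_A(\ell) \ge t = \mu_B(\ell)$. As $\ell$ was arbitrary, this establishes (i).

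Finally, (ii) follows at once: if $A \le B$, then $A$ and $B$ have the same cardinality $k$ and satisfy $a_i \le b_i$ for all $i$, which is exactly the definition of $A \preceq B$ with $r = s = k$, so applying (i) gives $\mu_A(\ell) \ge \mu_B(\ell)$. The only point requiring any care is the observation that $\mu_B(\ell)$ counts an \emph{initial segment} of the sorted list of $B$, so that the componentwise inequalities $a_i \le b_i$ can be applied to precisely the relevant prefix of indices; once this indexing is made explicit there is no real obstacle, the argument being entirely elementary.
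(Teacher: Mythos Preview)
Your proof is correct. For (i) and (ii) your argument is essentially the paper's: the paper also derives (ii) from (i), and its proof of (i) is the same prefix-counting idea, just split into three cases on the position of $\ell$ relative to $b_1$ and $b_s$, whereas you handle all cases uniformly by setting $t=\mu_B(\ell)$ and noting $a_1,\ldots,a_t\le\ell$. The one genuine difference is in (iii): you give the direct one-line argument $A\cap[\ell]\subseteq B\cap[\ell]$, while the paper instead observes that $A\subseteq B$ forces $B\preceq A$ (the first $|A|$ elements of $B$ are each at most the corresponding elements of $A$) and then invokes (i). Your route is shorter; the paper's has the minor structural advantage of making (i) the single source from which both (ii) and (iii) flow.
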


\begin{proof}
(i) Assume that $A = \{a_{1}, a_{2}, \ldots, a_{r}\}$ and $B = \{b_{1}, b_{2}, \ldots, b_{s}\}$.  
Since $A \preceq B$, we have $r \ge s$ and $\{a_{1}, a_{2}, \ldots, a_{s}\} \le \{b_{1}, b_{2}, \ldots, b_{s}\}$.

Now, we consider different cases for $ 1 \le \ell \le n$.  

If $\ell \ge b_{s}$, then $\mu_{B}(\ell) = s$. Since $a_{s} \le b_{s} \le \ell$, we also have $\mu_{A}(\ell) \ge s = \mu_{B}(\ell)$.  

If $\ell < b_{1}$, then $\mu_{B}(\ell) = 0$, which proves the required result.  

Finally, consider the case $b_{1} \le \ell < b_{s}$. Suppose $|B \cap [\ell]| = p$.  
This means that there are $p$ elements $b_{1}, \ldots, b_{p}$ not exceeding $\ell$.  
Since $a_{1} \le b_{1}, \ldots, a_{p} \le b_{p}$, we conclude that there are at least $p$ elements $a_{1}, \ldots, a_{p}$ not exceeding $\ell$.  
Thus, $\mu_{A}(\ell) \ge p = \mu_{B}(\ell)$.  

(ii) Since $A \le B$, we have $A \preceq B$. Hence, by part (i), $\mu_{A}(\ell) \ge \mu_{B}(\ell)$.  

(iii) Assume that $|A| = r$ and $|B| = s$. Since $A \subseteq B$, we have $r \le s$.  
We can observe that the first $r$ elements of $B$ do not exceed the corresponding $r$ elements of $A$, respectively.  
Hence, $B \preceq A$. Applying part (i), we also obtain $\mu_{B}(\ell) \ge \mu_{A}(\ell)$ for all $ 1 \le \ell \le n$.
\end{proof}
Theorem~1.1 (see~\cite{TTT}) characterizes when two sets are strongly intersecting. 
To further clarify the strongly intersecting property of two sets, we present the following lemma.

\begin{lem}
Let $G,H \in 2^{[n]}$. 
If $G \sim_{si} H$, then there exist indices $1 \le p \le |G|$ and $1 \le q \le |H|$ such that $g_{p} = h_{q} = p + q - 1$. In the case $G = H$, there exists $1 \le p \le |G|$ such that $g_{p} = 2p - 1$.
\end{lem}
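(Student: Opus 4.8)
The plan is to read off the required indices directly from the inequality supplied by Theorem~1.1. Since $G \sim_{si} H$, that theorem gives some $1 \le \ell \le n$ with $\mu_{G}(\ell) + \mu_{H}(\ell) > \ell$. I would then study the integer-valued function $f(\ell) := \mu_{G}(\ell) + \mu_{H}(\ell) - \ell$ on $\{0,1,\ldots,n\}$, with the convention $\mu_{X}(0) = 0$. One has $f(0) = 0$, and since $\mu_{X}(\ell) - \mu_{X}(\ell-1) \in \{0,1\}$ for any $X$ (as recalled before Proposition~2.1), the increments of $f$ lie in $\{-1,0,1\}$. By hypothesis $f$ is positive somewhere, so I would let $\ell_{0}$ be the \emph{least} index with $f(\ell_{0}) \ge 1$.

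From the minimality of $\ell_{0}$ one gets $f(\ell_{0}-1) \le 0$, and since $f(\ell_{0}) - f(\ell_{0}-1) \le 1$ this forces $f(\ell_{0}-1) = 0$, $f(\ell_{0}) = 1$, and moreover $\mu_{G}(\ell_{0}) - \mu_{G}(\ell_{0}-1) = \mu_{H}(\ell_{0}) - \mu_{H}(\ell_{0}-1) = 1$; that is, $\ell_{0} \in G \cap H$. Setting $p = \mu_{G}(\ell_{0})$ and $q = \mu_{H}(\ell_{0})$, the membership $\ell_{0} \in G$ gives $1 \le p \le |G|$ with $g_{p} = \ell_{0}$, and likewise $1 \le q \le |H|$ with $h_{q} = \ell_{0}$; while $f(\ell_{0}) = 1$ reads $p + q - \ell_{0} = 1$, i.e. $\ell_{0} = p + q - 1$. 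Hence $g_{p} = h_{q} = p + q - 1$. (The boundary case $\ell_{0} = 1$ is not special: it yields $p = q = 1$ and $g_{1} = h_{1} = 1$.) For the case $G = H$ I would run the identical argument with $f(\ell) = 2\mu_{G}(\ell) - \ell$, obtaining $\ell_{0} \in G$ with $2\mu_{G}(\ell_{0}) = \ell_{0} + 1$, so that $p := \mu_{G}(\ell_{0})$ satisfies $g_{p} = 2p - 1$.

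The argument is short, so I do not expect a genuine obstacle; the one point that needs care is that $\ell_{0}$ must be chosen \emph{minimal} — at an arbitrary $\ell$ with $f(\ell) \ge 1$ one cannot guarantee that both $\mu_{G}$ and $\mu_{H}$ increase at $\ell$, nor that $f(\ell)$ equals exactly $1$ rather than something larger — after which one only has to verify the elementary bookkeeping that $p = \mu_{G}(\ell_{0})$ genuinely indexes $\ell_{0}$ as the $p$-th element $g_{p}$ of $G$.
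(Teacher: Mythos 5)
Your proposal is correct and follows essentially the same route as the paper: both take the \emph{minimal} $\ell$ with $\mu_{G}(\ell)+\mu_{H}(\ell)>\ell$, use the fact that each $\mu_{X}$ increases by at most $1$ per step to force both counting functions to jump at $\ell$ (so $\ell\in G\cap H$ and the sum equals $\ell+1$), and then read off $p=\mu_{G}(\ell)$, $q=\mu_{H}(\ell)$ with $g_{p}=h_{q}=\ell=p+q-1$. Your packaging via $f(\ell)=\mu_{G}(\ell)+\mu_{H}(\ell)-\ell$ with $\mu_{X}(0)=0$ merely absorbs the paper's separate $\ell=1$ case; there is no substantive difference.
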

\begin{proof}
In~\cite{TTT}, we proved that if $G \sim_{si} H$, then there exists a number $1 \le \ell \le n$ such that $\mu_{G}(\ell) + \mu_{H}(\ell) > \ell$. 
We will show that there exist indices $1 \le p \le |G|$ and $1 \le q \le |H|$ such that $g_{p} = h_{q} = p + q - 1$. 

Indeed, choose $\ell$ to be the smallest such integer. 
If $\ell = 1$, then $g_{1} = h_{1} = 1$ and we can take $p = q = 1$. 
Now consider $\ell > 1$. 
Since $\ell$ is chosen to be minimal, we have
\[
\mu_{G}(\ell-1) + \mu_{H}(\ell-1) \le \ell - 1 < \ell < \mu_{G}(\ell) + \mu_{H}(\ell).
\]
Furthermore, note that $\mu_{G}(\ell) - \mu_{G}(\ell-1) \in \{0,1\}$ and $\mu_{H}(\ell) - \mu_{H}(\ell-1) \in \{0,1\}$. 
From these observations, we deduce the equalities
\[
\mu_{G}(\ell) = \mu_{G}(\ell-1) + 1, \quad 
\mu_{H}(\ell) = \mu_{H}(\ell-1) + 1, \quad 
\mu_{G}(\ell-1) + \mu_{H}(\ell-1) = \ell - 1, \quad 
\mu_{G}(\ell) + \mu_{H}(\ell) = \ell + 1.
\]
Since $\mu_{G}(\ell) + \mu_{H}(\ell) = \mu_{G}(\ell-1) + \mu_{H}(\ell-1) + 2$, we conclude that $\ell \in G \cap H$. 
Thus, there exist indices $p,q$ such that $g_{p} = h_{q} = \ell$. 
We have $\mu_{G}(\ell) = p$ and $\mu_{H}(\ell) = q$, so $\ell = p + q - 1$. 
Hence, $g_{p} = h_{q} = p + q - 1$. It is obvious that if $G=H$ then $p=q$ and $g_{p}=2p-1$.
\end{proof}

The following lemma provides several properties of a generator in a left-compressed intersecting family, which will be used in the proof of Theorem~1.3.
\begin{lem}
Let $G = \{g_{1}, \ldots, g_{r}\} \in \mathcal{G}_{k}(\overline{1})$. Then:
\begin{enumerate}[(i)]
\item If $G$ is a generator of an MLCIF, then $|G \cap [k + |G| - 1]| = |G|$;
\item Let $S = \{s_{1}, \ldots, s_{k}\}$ with $s_{1} \ge 2$ and $s_{i} \ge g_{i} + 1$ for some $i \in [r]$. Then $S$ and $G_{i} = \{1\} \cup [i + 1, g_{i}]$ are not strongly intersecting.
\end{enumerate}
\end{lem}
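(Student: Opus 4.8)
The plan is to prove (ii) first, since it is a clean computation with the strongly-intersecting criterion and is also the engine behind (i).

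For (ii) I would use the reformulation recorded after Theorem~1.1: two sets are \emph{not} strongly intersecting exactly when $\mu_A(\ell)+\mu_B(\ell)\le\ell$ for every $1\le\ell\le n$. Here $G_i=\{1\}\cup[i+1,g_i]$; since $1\notin G$ forces $g_1\ge 2$ and hence $g_j\ge j+1$ for all $j$, the interval $[i+1,g_i]$ is nonempty and
\[
\mu_{G_i}(\ell)=
\begin{cases}
1,& 1\le\ell\le i,\\
\ell-i+1,& i+1\le\ell\le g_i,\\
g_i-i+1,& \ell\ge g_i.
\end{cases}
\]
On the other side, $s_1\ge 2$ gives $s_j\ge j+1$, hence $\mu_S(\ell)\le\ell-1$ for all $\ell$, while $s_i\ge g_i+1$ gives $\mu_S(g_i)\le i-1$; using that $\mu_S$ is non-decreasing and increases by at most $1$ per unit, this yields $\mu_S(\ell)\le i-1$ for $\ell\le g_i$ and $\mu_S(\ell)\le(i-1)+(\ell-g_i)$ for $\ell\ge g_i$. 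Adding the two bounds in each of the three ranges of $\ell$ gives $\mu_S(\ell)+\mu_{G_i}(\ell)\le\ell$ throughout, which is the claim; I would just display these three one-line inequalities.

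For (i) I would argue by contradiction: suppose the generator $G$ of the MLCIF $\mathcal{A}$ has $1\notin G$ but $g_r\ge k+r$. Since $g_j\ge j+1$, we have $[2,k+1]\preceq G$, so $[2,k+1]\in\mathcal{F}(G)\subseteq\mathcal{A}$; and since $\mathcal{F}(G)$ is intersecting, $G\sim_{si}G$, so Lemma~2.2 gives some $p$ with $g_p=2p-1$, whence $p\ge 2$ (as $g_1\ge 2>1$) and in particular $r\ge 2$. The heart is to show $G$ cannot be a $\preceq$-maximal generator once $g_r\ge k+r$. Using Baber's Theorem~1.2 to realize $\mathcal{A}$ as the extension of an MLCIF $\mathcal{A}_0\subseteq\binom{[2k]}{k}$, one notes that any $k$-subset $S$ of $[2k]$ satisfies $s_r\le s_k-(k-r)\le k+r\le g_r$, so the constraint ``$s_r\le g_r$'' in the definition of the corresponding family $\mathcal{F}(2k,k,G)$ is vacuous; hence $\mathcal{F}(2k,k,G)=\mathcal{F}(2k,k,G\setminus\{g_r\})$ (legitimate since $r\ge 2$), so $G\setminus\{g_r\}$ is a strictly $\preceq$-larger set with the same closure inside $\mathcal{A}_0$, contradicting $\preceq$-maximality of $G$. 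Transporting back through Baber's correspondence gives the contradiction on $[n]$, and $g_r\le k+r-1$ is exactly $|G\cap[k+|G|-1]|=|G|$.

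The step I expect to be the main obstacle is this last one. First, one must pin down precisely what ``generator of an MLCIF'' means for the statement to hold: it must be a $\preceq$-maximal element of the reduced generating set, since for an arbitrary $H$ with $\mathcal{F}(H)\subseteq\mathcal{A}$ the inequality can fail (e.g. $\mathcal{F}(\{2,3,6\})=\mathcal{F}(\{2,3\})$ on $[6]$ with $k=3$). Second, one must check that Baber's bijection between MLCIFs on $[n]$ and on $[2k]$ carries reduced generators to reduced generators. A self-contained alternative, avoiding Baber, would be to contradict maximality directly: assuming $g_r\ge k+r$, build a $k$-set $B\notin\mathcal{A}$ that still meets every member of $\mathcal{A}$, using $[2,k+1]\in\mathcal{A}$ together with the left-compressed structure to push complements down; exhibiting such a $B$ is the delicate point, and I would resort to it only if the Baber reduction proves awkward to state.
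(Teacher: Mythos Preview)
Your argument for (ii) is correct and matches the paper's proof; both verify $\mu_S(\ell)+\mu_{G_i}(\ell)\le\ell$ piecewise, and your three-range split is in fact a slightly cleaner packaging of the paper's five cases.

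For (i), the paper takes a more direct route that avoids Baber entirely and sidesteps the interpretive issues you flag. Assuming $g_r\ge k+r$, the paper observes that for \emph{any} generator $H$ of $\mathcal{A}$ (including $H=G$) and any $\ell\ge k+r$ one has $\mu_G(\ell)\le r\le\ell-k$ and $\mu_H(\ell)\le|H|\le k$, hence $\mu_G(\ell)+\mu_H(\ell)\le\ell$. So the $\ell$ witnessing $G\sim_{si}H$ must satisfy $\ell\le k+r-1<g_r$, whence $\mu_{G'}(\ell)=\mu_G(\ell)$ for $G'=\{g_1,\dots,g_{r-1}\}$, giving $G'\sim_{si}H$. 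Then $\mathcal{G}'=(\mathcal{G}\setminus\{G\})\cup\{G'\}$ is still strongly intersecting, so $\mathcal{F}(\mathcal{G}')$ is an LCIF with $\mathcal{F}(\mathcal{G})\subsetneq\mathcal{F}(\mathcal{G}')$, contradicting maximality of $\mathcal{A}$. The whole thing lives inside the $\mu$-calculus of Theorem~1.1; no transport between $[n]$ and $[2k]$, and no appeal to $\preceq$-maximality of $G$, is needed.

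Your Baber-based route ultimately aims at the same replacement $G\mapsto G'$, but the detour through $[2k]$ only yields $\mathcal{F}(2k,k,G)=\mathcal{F}(2k,k,G')$; to get a contradiction you still need $\mathcal{F}(\mathcal{G}')$ to remain intersecting, which is exactly the $G'\sim_{si}H$ step above. So the reduction adds the transport problems you identify without eliminating the core computation. The paper's argument is in spirit the ``self-contained alternative'' you mention at the end, except that it contradicts maximality by \emph{enlarging} the generating family rather than by exhibiting a single bad $B$. Your caveat about what ``generator'' means is apt: the paper's proof implicitly reads the hypothesis as ``$G\in\mathcal{G}$ for some $\mathcal{G}$ with $\mathcal{F}(\mathcal{G})$ an MLCIF'' and contradicts maximality of the \emph{family} $\mathcal{A}$, not $\preceq$-maximality of $G$ within $\mathcal{G}$.
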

\begin{proof}
(i) Suppose that $G$ is a generator of an MLCIF $\mathcal{A}$. Assume, to the contrary, that $|G \cap [k + |G| - 1]| < |G|$. This means that $g_{r} \ge k + r$.  

Let $H$ be a generator of $\mathcal{A}$ (it is possible that $H = G$).  
Since $G \sim_{si} H$, there exists $\ell$ such that $\mu_{G}(\ell) + \mu_{H}(\ell) > \ell$.  
For $\ell \ge k + r$, we have $\mu_{G}(\ell) \le r \le \ell - k$.  
Hence $\mu_{G}(\ell) + \mu_{H}(\ell) \le k + \ell - k = \ell$.  
Therefore, we must have $\ell \le k + r - 1 < g_{r}$.  

Set $G' = \{g_{1}, \ldots, g_{r-1}\}$.  
Since $\ell \le k + r - 1$, we have $|G \cap [\ell]| = |G' \cap [\ell]|$,  
so $\mu_{G}(\ell) = \mu_{G'}(\ell)$.  
Thus, $\mu_{G'}(\ell) + \mu_{H}(\ell) > \ell$.  
By Theorem~1.1, $G' \sim_{si} H$.  

On the other hand, from $G' \subsetneq G$ we get $G \preceq G'$, and therefore $\mathcal{F}(G) \subsetneq \mathcal{F}(G')$.  
The family $\mathcal{G}' = (\mathcal{G} \setminus \{G\}) \cup \{G'\}$ is strongly intersecting, so $\mathcal{F}(\mathcal{G}')$ is an LCIF.  
However, $\mathcal{F}(\mathcal{G}) \subsetneq \mathcal{F}(\mathcal{G}')$, contradicting the maximality of $\mathcal{A}$.  

(ii) Now consider $S$ such that $1 \notin S$ and $s_{i} \ge g_{i} + 1$ for some $i \in [r]$.  
We will show that $\mu_{S}(\ell) + \mu_{G_{i}}(\ell) \le \ell$ for all $1 \le \ell \le n$.  
We consider the following cases:

\textbf{Case 1: $\ell = 1$.}  
We have $\mu_{S}(\ell) = 0$ and $\mu_{G_{i}}(1) = 1$, so $\mu_{S}(1) + \mu_{G_{i}}(1) = 1$.

\textbf{Case 2: $2 \le \ell \le i$.}  
We have $\mu_{G_{i}}(\ell) = 1$.  
Suppose that $s_{\ell - 1} < \ell$. Then $\ell - 1 \le s_{\ell - 1} < \ell$, implying $s_{\ell - 1} = \ell - 1$ and hence $s_{1} = 1$, a contradiction.  
Thus $s_{\ell - 1} \ge \ell$, and so $\mu_{S}(\ell) \le \ell - 1$.  
Hence $\mu_{S}(\ell) + \mu_{G_{i}}(\ell) \le \ell$.

\textbf{Case 3: $i + 1 \le \ell \le g_{i}$.}  
We have $\mu_{G_{i}}(\ell) = \ell - i + 1$.  
On the other hand, since $s_{i} \ge g_{i} + 1 \ge \ell + 1$, we have $\mu_{S}(\ell) \le i - 1$.  
Thus $\mu_{S}(\ell) + \mu_{G_{i}}(\ell) \le (i - 1) + (\ell - i + 1) = \ell$.

\textbf{Case 4: $g_{i} + 1 \le \ell < s_{k}$.}  
We have $\mu_{G_{i}}(\ell) = g_{i} - i + 1$.  
If $g_{i} + 1 \le \ell \le s_{i}$, then $\mu_{S}(\ell) \le i - 1$, giving  
$\mu_{S}(\ell) + \mu_{G_{i}}(\ell) \le i - 1 + g_{i} - i + 1 = g_{i} < \ell$.  
Otherwise, if $g_{i} + 1 \le s_{i} < \ell$, then $\mu_{S}(\ell) = j \ge i$.  
Since $g_{i} < g_{i} + 1 \le s_{i} \le s_{j} \le \ell$, we have  
$\ell - (g_{i} + 1) \ge s_{j} - s_{i} \ge j - i$.  
It follows that $\mu_{S}(\ell) + \mu_{G_{i}}(\ell) = j + g_{i} - i + 1 \le \ell$.

\textbf{Case 5: $\ell \ge s_{k}$.}  
We have $s_{k} - k \ge s_{i} - i \ge g_{i} - i + 1 = |G_{i}|$.  
Hence $\ell \ge s_{k} \ge k + |G_{i}|$.  
Therefore $\mu_{G_{i}}(\ell) = |G_{i}| = g_{i} - i + 1$, and  
$\mu_{S}(\ell) + \mu_{G_{i}}(\ell) \le k + |G_{i}| \le \ell$.  

In all cases, $\mu_{S}(\ell) + \mu_{G_{i}}(\ell) \le \ell$ for all $1 \le \ell \le n$.  
By Theorem~1.1, $S$ and $G_{i}$ are not strongly intersecting.
\end{proof}

The following lemma will also be used in the proof of Theorem 1.3.

\begin{lem}
Suppose that $A, B$ and $C$ are subsets of $[n]$. Then
\begin{itemize}
    \item[(i)] If $B \preceq A $, then $B \wedge A = B$;
    \item[(ii)] $C \preceq A \wedge B$ if and only if $C \preceq A$ and $C \preceq B$.
\end{itemize}
\end{lem}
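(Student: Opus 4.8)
The plan is to prove both parts by directly unwinding the definitions of $\preceq$ and $\wedge$; there is no idea beyond careful index bookkeeping, and the cleanest way to dispatch that bookkeeping is to work with the $+\infty$-padded sequences already used in the definition of $\wedge$. Concretely, for $A=\{a_{1}<\cdots<a_{r}\}\subseteq[n]$ I set $a_{i}=+\infty$ for $i>r$. The first step is to record the reformulation: for subsets $A,B\subseteq[n]$ one has $A\preceq B$ if and only if $a_{i}\le b_{i}$ for every $i\ge 1$. Indeed, when $|A|\ge|B|$ this is literally the original condition (indices $i>|B|$ contribute nothing since then $b_{i}=+\infty$), while when $|A|<|B|$ any index $i$ with $|A|<i\le|B|$ gives $a_{i}=+\infty>b_{i}$, so the padded inequality fails, matching the fact that $A\not\preceq B$ in that case. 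Likewise, by definition the padded sequence of $A\wedge B$ is exactly $(\min(a_{i},b_{i}))_{i\ge 1}$ and $|A\wedge B|=\max(|A|,|B|)$. (One may note in passing that $A\wedge B$ is genuinely a set, its entries being strictly increasing, since $\min(a_{i},b_{i})<\min(a_{i+1},b_{i+1})$ follows from $a_{i}<a_{i+1}$ and $b_{i}<b_{i+1}$.)

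With this in hand, part (i) is immediate: if $B\preceq A$ then $b_{i}\le a_{i}$ for all $i$, so the padded $i$-th entry of $B\wedge A$ is $\min(b_{i},a_{i})=b_{i}$, and $|B\wedge A|=\max(|B|,|A|)=|B|$; hence $B\wedge A=B$. For part (ii), using the reformulation, $C\preceq A\wedge B$ holds iff $c_{i}\le\min(a_{i},b_{i})$ for all $i\ge 1$, which is equivalent to "$c_{i}\le a_{i}$ for all $i$ and $c_{i}\le b_{i}$ for all $i$", i.e. to "$C\preceq A$ and $C\preceq B$". If one prefers to avoid the padded-sequence reformulation, the forward direction uses that the $i$-th entry of $A\wedge B$ is $\le a_{i}$ for each $i\le|A|$ and that $|A\wedge B|\ge|A|$, giving $C\preceq A$, and symmetrically $C\preceq B$; the backward direction uses $|C|\ge\max(|A|,|B|)=|A\wedge B|$ together with $c_{i}\le\min(a_{i},b_{i})$ on the overlapping range and $c_{i}\le$ whichever of $a_{i},b_{i}$ remains in range beyond it.

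The only real hazard is a careless off-by-one: one must keep straight that $\preceq$ imposes a cardinality inequality in addition to the entrywise comparison, and that $A\wedge B$ inherits the larger cardinality. Folding everything into the $+\infty$-padded sequences eliminates this, since $\min$ is then literally the meet for $\le$ on $\RR\cup\{+\infty\}$, and both claims reduce to the lattice identities $x\le y\iff x=\min(x,y)$ and $z\le\min(x,y)\iff z\le x \text{ and } z\le y$.
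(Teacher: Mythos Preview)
Your proof is correct and follows essentially the same approach as the paper's: both arguments unpack the definitions and rely on the $+\infty$-padding convention to handle the cardinality mismatch. The only difference is that you reformulate $\preceq$ once and for all in terms of padded sequences and then read off both parts as lattice identities, whereas the paper keeps the original definition and does explicit case splits on $|A|=|B|$ versus $|A|\neq|B|$; your packaging is a bit cleaner but the content is the same.
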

\begin{proof}
(i) Assume $A=\{a_{1},\ldots,a_{r}\}$ and $B=\{b_{1},\ldots,b_{s}\}$. Since $B \preceq A$, we have $ r \ge s$ and $b_{i} \le a_{i}$ for all $1 \le i \le r $. If $r=s$ then it is clear that $B \wedge A = B$. If $s > r$ then $a_{i}= +\infty$ for all $r+1 \le i \le s$. So, we also have $B \wedge A = B$.\\
(ii) Let $r=|A|, s=|B|$ and $c_{i}$ be the $i$-th coordinate of $C$. Suppose that $C \preceq A \wedge B$. We have $|C| \ge |A \wedge B|$. So $|C| \ge |A|$ and $|C| \ge |B|$. We find that the first $r$ elements of $C$ do not exceed the corresponding elements of $A \wedge B$. It follows that the first $|A|$ elements of $C$ do not exceed those of $A$. Hence $C \preceq A$. Similarly, we also have $C \preceq B$.\\
We now consider the converse direction. First, since $C \preceq A$ and $C \preceq B$, we have $|C| \ge |A|$ and $|C| \ge |B|$. Moreover, $\{c_{1},\ldots,c_{r}\} \le A$ and  $\{c_{1},\ldots,c_{s}\} \le B$. So $|C| \ge |A \wedge B|$. We consider two cases as follows. First, assume that $r=s$. It is obvious that $C \le A\wedge B$. Thus, $C \preceq A \wedge B$. Next, we consider $r < s$. We have $|A \wedge B|=s$. On the other hand, $a_{i}=+\infty$ for all $i \in [r+1,s]$. So, $A\wedge B= \{m_{1},\ldots,m_{r},b_{r+1},\ldots,b_{s}\}$, where $m_{i}=\min \{a_{i},b_{i}\}$ for all $i \in [r]$. Thus, $c_{i} \le m_{i}$ for all $i \in [r]$ and $c_{i} \le b_{i}$ for all $i \in [r+1,s]]$. Hence, $\{c_{1},\ldots,c_{s}\} \le A \wedge B$ and therefore $C \preceq A \wedge B$.
\end{proof}
\subsection{ Proof of Theorem 1.3}
\begin{proof} The proof of Theorem 1.3 is divided into several parts.\\ \\
\textbf{The family $\mathcal{F}(\mathcal{G})\cup \mathcal{F}(\mathcal{H}(\mathcal{G}))$ is LCIF }\\
First, we will prove that the family $\mathcal{G}\cup \mathcal{H}(\mathcal{G})$ is strongly intersecting. Clearly, both $\mathcal{G}$ and $\mathcal{H}(\mathcal{G})$ are strongly intersecting families. Next, take any $G^{j} \in \mathcal{G}$ and $H \in \mathcal{H}(\mathcal{G})$. Then, there exists $(i_{1},\ldots,i_{m}) \in ([r_{1}] \times \ldots [r_{m}])\}$ such that $H \leq G^{1}_{i_{1}} \wedge \cdots \wedge G^{m}_{i_{m}}$. This implies that $H \preceq G^{1}_{i_{1}} \wedge \cdots \wedge G^{m}_{i_{m}}$. From Lemma 2.3, we have $H \preceq G^{j}_{i_{j}}$ for all $j \in [m]$. We have $G^{j} \sim_{si} G^{j}_{i_{j}}$ at $g^{j}_{i_{j}}$ since $\mu_{G^{j}}(g^{j}_{i_{j}})+\mu_{G^{j}_{i_{j}}}(g^{j}_{i_{j}})=i_{j}+g^{j}_{i_{j}}-i_{j}+1 >g^{j}_{i_{j}}$. On the other hand, since $H \preceq G^{j}_{i_{j}}$, by Proposition 2.1, we have $\mu_{H}(g^{j}_{i_{j}}) \ge \mu_{G^{j}_{i_{j}}}(g^{j}_{i_{j}})$. So, $\mu_{G^{j}}(g^{j}_{i_{j}})+\mu_{H}(g^{j}_{i_{j}}) \ge \mu_{G^{j}}(g^{j}_{i_{j}})+\mu_{G^{j}_{i_{j}}}(g^{j}_{i_{j}}) >g^{j}_{i_{j}}$ and therefore $H \sim_{si} G^{j}$ at $g^{j}_{i_{j}}$. Thus, the family $\mathcal{G} \cup \mathcal{H}(\mathcal{G})$ is strongly intersecting. Theorem 1.1 gives us that $\mathcal{F}(\mathcal{G}) \cup \mathcal{F}(\mathcal{H}(\mathcal{G}))$ is LCIF. \\ \\
\textbf{ The family $\mathcal{F}(\mathcal{G})\cup \mathcal{F}(\mathcal{H}(\mathcal{G}))$ is maximal}\\
Next, we will show that $\mathcal{A}=\mathcal{F}(\mathcal{G}) \cup \mathcal{F}(\mathcal{H}(\mathcal{G}))$ is maximal. Assume that $\mathcal{A}\subsetneq \mathcal{B}$, where $\mathcal{B}$ is a LCIF. We take $S \in \mathcal{B} \setminus \mathcal{A}$. Since $S \notin \mathcal{A}$, it follows that $S \notin \mathcal{F}(\mathcal{G})$ and $S \notin \mathcal{F}(\mathcal{H}(\mathcal{G}))$.\\ 
\textbf{ The case of $1 \in S$ }.\\
Assume $S=\{1,s_{2},\ldots,s_{k}\}$. Since both $S$ and $\mathcal{F}(G^{j})$ are contained in $\mathcal{B}$, we conclude that $\mathcal{F}(S) \cup \mathcal{F}(G^{j})$ is intersecting. So, $S \sim_{si} G^{j}$. From Lemma 2.2, there exist $1\leq i_{j} \leq r_{j}$ and $1 \leq p \leq k$ such that $s_{p}=g^{j}_{i_{j}}=p+i_{j}-1$. It follows that $s_{2} \leq i_{j}+1$. Thus, $S \preceq \{1\} \cup [i_{j}+1,g^{j}_{i_{{j}}}]=G^{j}_{i_{j}}$ for all $j \in [m]$. Lemma 2.3 implies that $S \preceq G^{1}_{i_{1}} \wedge \cdots \wedge G^{m}_{i_{m}}$. It is obvious $G^{1}_{i_{1}} \wedge \cdots \wedge G^{m}_{i_{m}} \in \mathcal{H}(\mathcal{G})$. Thus, $S \in \mathcal{F}(\mathcal{H}(\mathcal{G}))$ which is a contradiction.\\ 
\noindent
\textbf{ The case $1 \notin S$}.\\ Since $S \notin \mathcal{F}(\mathcal{G})$, for each $G^{j} \in \mathcal{G}$, there exists $i_{j} \in [r_{j}]$ such that $s_{i_{j}} \geq g^{j}_{i_{j}}+1$. For such a tuple $(i_{1},\ldots,i_{m}) \in \left([r_{1}] \times \ldots [r_{m}]\right)$, we have $G=G^{1}_{i_{1}} \wedge \cdots \wedge G^{m}_{i_{m}} \in \mathcal{H}(\mathcal{G})$. For convenience, we simplify the structure of the set $G$ as follows. By Lemma 2.4, if among the sets $G^{1}_{i_{1}},\ldots,G^{m}_{i_{m}}$, there exist two sets $G^{p}_{i_{p}}$ and $G^{q}_{i_{q}}$ such that $G^{p}_{i_{p}} \preceq G^{q}_{i_{q}}$, then $G^{p}_{i_{p}} \wedge G^{q}_{i_{q}}= G^{p}_{i_{p}}$. So, we can remove $G^{q}_{i_{q}}$ without changing $G$. We consider the following possible cases. If $i_{p}=i_{q}$ then we must have $G^{p}_{i_{p}} \subseteq G^{q}_{i_{q}}$ or $G^{q}_{i_{q}} \subseteq G^{p}_{i_{p}}$. This means that $G^{p}_{i_{p}} \preceq G^{q}_{i_{q}}$ or $G^{q}_{i_{q}} \preceq G^{p}_{i_{p}}$. Hence, we can remove $A$ either $G^{p}_{i_{p}}$ or $G^{q}_{i_{q}}$ from the structure of $G$ without changing $G$. Next, consider $i_{p} < i_{q}$. If $g^{q}_{i_{q}} \le g^{p}_{i_{p}}$ then $G^{q}_{i_{q}} \subseteq G^{p}_{i_{o}}$ and so $G^{p}_{i_{p}} \preceq G^{q}_{i_{q}}$. In this case, we also remove $G^{q}_{i_{q}}$ from $G$ and $G$ remains unchanged. Finally, assume that there exist $p$ such that $g^{p+1}_{i_{p+1}} -g^{p}_{i_{p}} \le i_{p+1}-i_{p}$. We have $g^{p+1}_{i_{p+1}}-i_{p+1}+1 \le g^{p}_{i_{p}}-i_{p}+1$. This means $\left|G^{p+1}_{i_{p+1}}\right| \le \left|G^{p}_{i_{p}}\right|$. From the above analysis, we may regard that $i_{p} <i_{p+1}$, it follows that $G^{p}_{i_{p}} \preceq G^{p+1}_{i_{p+1}}$. Thus, we can also remove the set $G^{p+1}_{i_{p+1}}$ without changing $G$. 
From the above observation, we may assume (renumbering the indices if necessary) that  
\[
\begin{aligned}
&(1)\quad i_1 < i_2 < \cdots < i_p,\\
&(2)\quad \left|G^{j+1}_{i_{j+1}}\right| \ge \left|G^{j}_{i_{j}}\right|+1 
\quad\text{for all }0\le j\le p-1.
\end{aligned}
\]
Then, $G$ remains unchanged, that is $G=G^{1}_{i_{1}} \wedge \cdots \wedge G^{m}_{i_{m}}=G^{1}_{i_{1}} \wedge \cdots \wedge G^{p}_{i_{p}}$.\\
It is easy to see that $\left|G^{j}_{i_{j}}\right|+i_{j}-1 < \left|G^{j}_{i_{j}}\right|+i_{j+1}$. So, all sets of the form 
$\left[\left|G^{j}_{i_{j}}\right|+i_{j+1},\left|G^{j+1}_{i_{j+1}}\right|+i_{j+1}-1\right]$ are pairwise disjoint for all $0 \le j \le p-1$.
Now, we define  
$$G'=\{1\} \cup \bigcup_{j=0}^{p -1}\left[\left|G^{j}_{i_{j}}\right|+i_{j+1},\left|G^{j+1}_{i_{j+1}}\right|+i_{j+1}-1\right]$$ with the convention that $g^{0}_{i_{0}}=i_{j_{0}}=\left|G^{0}_{i_{0}}\right|=1$.\\
We will show that $G=G'$. Recall that the size of $G$ is $d=max\{g^{j}_{i_{j}}-i_{j}+1: j \in [p] \}$. From condition (2), we deduce that $d=g^{p}_{i_{p}}-i_{p}+1=|G^{p}_{i_{p}}|$. On the other hand, the size of $\left[\left|G^{j}_{i_{j}}\right|+i_{j+1},\left|G^{j+1}_{i_{j+1}}\right|+i_{j+1}-1\right]$ is $\left|G^{j+1}_{i_{j+1}}\right|-\left|G^{j}_{i_{j}}\right|$. So, the size of $G'$ is $\left|G^{p}_{i_{p}}\right|-\left|G^{0}_{i_{0}}\right|=\left|G^{p}_{i_{p}}\right|$. Hence, $|G|=|G'|$. It remains to show that the $i$-coordinate of $G$ coincides with that of $G'$. Assume $G=\{g_{1},\ldots,g_{d}\}$ and $G'=\{g'_{1},\ldots,g'_{d}\}$. For all $i \in [d]$,  there exists a unique index \(q\) such that $G^{q}_{i_{q}}+1 \le i \le G^{q+1}_{i_{q+1}}$. We observe that all elements of the set $\left[\left|G^{q}_{i_{q}}\right|+i_{q+1},\left|G^{q+1}_{i_{q+1}}\right|+i_{q+1}-1\right]$ are consecutive integers starting at  $\left(\left|G^{q}_{i_{q}}\right|+1\right)$-th coordinate and ending at the $\left(\left|G^{q+1}_{i_{q+1}}\right|\right)$-th coordinate of $G'$. Thus, $g'_{i}= i+i_{q+1}-1$. Next, we consider the $i$-th coordinate of each $G^{j}_{i_{j}}$ with $j \in [p]$. Since $G^{j}_{i_{j}}=\{1\} \cup \left[i_{j}+1,g^{j}_{i_{j}}\right]$, the $i$-th coordinate of each $G^{j}_{i_{j}}$ is $i+i_{j}-1$ if $i \in \left[2,\left|G^{j}_{i_{j}}\right|\right]$ and $+\infty$ if $i > \left|G^{j}_{i_{j}}\right|$. Since $G=G^{1}_{i_{1}}\wedge \cdots \wedge G^{p}_{i_{p}}$, we have $g_{i} = min\{i+i_{j}-1: j \in [p]\}$. Since $G^{q}_{i_{q}}+1 \le i \le G^{q+1}_{i_{q+1}}$, all the $i$-th coordinates of $G^{j}_{i_{j}}$ for $j \in [q]$ are equal to $+\infty$. Therefore, the minimum value of \(i + i_{j} - 1\) over all \(j \in [p]\) is exactly \(i + i_{q+1} - 1\). Thus, the $i$-th coordinate of $G$ is $g_{i}=i+i_{q+1}-1$, which coincides with that of $G'$. Hence, we conclude that $G=G'$.\\ 
We will prove that $G$ and $S$ are not intersecting, that is, for all $1 \leq \ell \leq n$, we always have $\mu_{S}(\ell)+\mu_{G}(\ell) \leq \ell$. For $\ell = 1$, we have $\mu_G(1) = \mu_{G^{j}_{i_j}}(1) = 1 \text{ for all } j \in [p].$ Next, consider $ \left|G^{j}_{i_{j}}\right|+i_{j+1} \le \ell \le  \left|G^{j+1}_{i_{j+1}}\right|+i_{j+1}-1$. Then
$\mu_{G}(\ell) = \ell - \left|G^{j}_{i_{j}}\right| - i_{j+1} + 1 + \left|G^{j}_{i_{j}}\right| = \ell - i_{j+1} + 1$, and it is straightforward to see that
$\mu_{G^{j+1}_{i_{j+1}}}(\ell) = \ell - i_{j+1} + 1 = \mu_{G}(\ell)$.
Next, for $\left|G^{j}_{i_{j}}\right|+i_{j+1} \le \ell \le  \left|G^{j}_{i_{j}}\right|+i_{j+1}-1|$, we have $\ell \notin G$. Hence, $\mu_{G}(\ell) = \mu_{G^{j}_{i_{j}}}(\ell) = g^{j}_{i_{j}} - i_{j} + 1$.
Finally, for $\ell > g^{p}_{i_{p}}$, we have
$\mu_{G}(\ell) = \mu_{G^{p}_{i_{p}}}(\ell) = g^{p}_{i_{p}} - i_{p} + 1$. Therefore, in all cases, for each $1 \le \ell \le n$, there exists $j \in [p]$ such that $\mu_{G}(\ell) = \mu_{G^{j}_{i_{j}}}(\ell)$. By Lemma 2.3, $S$ and $G^{j}_{i_{j}}$ are not strongly intersecting with respect to any $G^{j}_{i_{j}}$. Consequently, for all $1 \le \ell \le n$ we have,
$\mu_{S}(\ell) + \mu_{G}(\ell) = \mu_{S}(\ell) + \mu_{G^{j}_{i_{j}}}(l) \le l$.\\
In summary, we have shown that $\mu_{S}(\ell)+\mu_{G}(\ell) \leq \ell$ for all $1 \leq \ell \leq n$. Hence, $S$ and $G$ are not strongly intersecting. This means there exists two sets $T \leq S$ and $H \leq G$ such that $T \cap H=\emptyset$. It is obvious that $T \in \mathcal{A}$ and $H \in \mathcal{A}$ ( since $H \in \mathcal{F}(\mathcal{H}(\mathcal{G}))$) and we have a contradiction. \\ \\
\textbf{ Every MLCIF $\mathcal{A} \subseteq \binom{[n]}{k}$, there exists PGS $\mathcal{G}$ such that $\mathcal{A}=\mathcal{F}(\mathcal{G})\cup \mathcal{F}(\mathcal{H}(\mathcal{G}))$}\\
Since $\mathcal{A}$ is left-compressed, we can completely enumerate all elements of $\mathcal{A}$ through its maximal elements. Let $\mathcal{A}^{*}$ be the set of all maximal elements of $\mathcal{A}$ with respect to the order ``$\le$''. For each $A = \{a_{1},\ldots,a_{k}\} \in \mathcal{A}^{*}$, we have $a_{1} < k+1$, since otherwise there exists $B = \{1,\ldots,k\} \le A$ and $A \cap B = \emptyset$. Let $r = r_{A}$ be the largest index such that $a_{r} < r + k$. We define $\pi(A)=\{a_{1},\ldots,a_{r}\}$ and $\pi(\mathcal{A}^{*}) = \{\pi(A) : A \in \mathcal{A}^{*}\}$. Note that $A \preceq \pi(A)$ for all $A \in \mathcal{A}^{*}$.

We now show that $\pi(\mathcal{A}^{*})$ is strongly intersecting. Take any $G, H \in \pi(\mathcal{A}^{*})$. Then $G = \pi(A)$ and $H = \pi(B)$ for some $A, B \in \mathcal{A}^{*}$. Since $A \sim_{si} B$, there exists $\ell \in [n]$ such that $\mu_{A}(\ell) + \mu_{B}(\ell) > \ell$. Assume $A=\{a_{1},\ldots,a_{k}\}$ and $G=\pi(A)=\{a_{1},\ldots,a_{r}\}$. We will prove that $\ell \le k+r-1$.

First, consider the case $r = k$. Then $a_{k} < 2k$. For $\ell \ge k+r = 2k$, we have $\mu_{G}(\ell)=k$, and hence $\mu_{G}(\ell)+\mu_{H}(\ell) \le 2k \le \ell$, a contradiction. Thus $\ell \le k+r-1$.

Next, consider the case $r < k$. In this case, $a_{r+t} \ge k+r+t$ for all $1 \le t \le k-r$, and in particular $a_{k} \ge 2k$. Now consider any $\ell \ge k+r$.
\begin{itemize}
    \item If $\ell \ge a_{k}$, then $\mu_{G}(\ell)=k$, and so $\mu_{G}(\ell)+\mu_{H}(\ell)\le 2k\le \ell$.
    \item If $\ell = k+r$, then $\mu_{G}(\ell)=r=\ell-k$, and thus $\mu_{G}(\ell)+\mu_{H}(\ell)\le \ell$.
    \item If $k+r+1 \le \ell < a_{k}$, then there exists $r+1 \le i \le k-1$ such that $a_{i} \le \ell < a_{i+1}$. Since $i \ge r+1$, we have $a_{i} \ge k+i$, so $\mu_{G}(\ell)=i \le a_{i}-k$, again giving $\mu_{G}(\ell)+\mu_{H}(\ell)\le \ell$.
\end{itemize}
Thus, every case with $\ell \ge k+r$ yields a contradiction, and therefore $\ell \le k+r-1$. For such $\ell$ we have $\mu_{G}(\ell)=\mu_{A}(\ell)$ and similarly $\mu_{H}(\ell)=\mu_{B}(\ell)$, so $\mu_{G}(\ell)+\mu_{H}(\ell)=\mu_{A}(\ell)+\mu_{B}(\ell)>\ell$. Hence $G \sim_{si} H$ at $\ell$, proving that $\pi(\mathcal{A}^{*})$ is strongly intersecting.

Now define two families $\mathcal{G}=\pi(\mathcal{A}^{*})\cap \mathcal{G}_{k}(\bar{1})$ and $\mathcal{G}'=\pi(\mathcal{A}^{*})\cap \mathcal{G}_{k}(1)$. Observe that $\mathcal{G}$ is a PGS. Suppose $\mathcal{G}=\{G^{1},\ldots,G^{m}\}$. We prove that
\[
\mathcal{A} \subseteq \mathcal{F}(\mathcal{G}) \cup \mathcal{F}(\mathcal{H}(\mathcal{G})).
\]
Indeed, take any $S \in \mathcal{A}$. Then there exists $A \in \mathcal{A}^{*}$ such that $S \le A$. Clearly, $S \preceq \pi(A)$. If $1 \notin S$, then $\pi(A) \in \mathcal{G}$ and hence $S \in \mathcal{F}(\mathcal{G})$. If $1 \in S$, then $\pi(A) \in \mathcal{G}'$ and thus $S \in \mathcal{F}(\mathcal{G}')$. Therefore,
\[
\mathcal{A} \subseteq \mathcal{F}(\mathcal{G}) \cup \mathcal{F}(\mathcal{G}').
\]

Next, we prove that $\mathcal{G}' \subseteq \mathcal{H}(\mathcal{G})$. Take any $H \in \mathcal{G}'$. For every $G^{j} \in \mathcal{G}$, we have $H \sim_{si} G^{j}$. By Lemma 2.2, there exist indices $p_{j}$ and $i_{j}$ such that $h_{p_{j}}=g^{j}_{i_{j}}=p_{j}+i_{j}-1$, which implies $H \preceq G^{j}_{i_{j}}$ at $\ell=p_{j}+i_{j}-1$ for all $1\le j\le m$. By Lemma 2.3,
\[
H \preceq G^{1}_{i_{1}}\wedge\cdots\wedge G^{m}_{i_{m}}.
\]
Let $G=G^{1}_{i_{1}}\wedge\cdots\wedge G^{m}_{i_{m}}$. Clearly $|H| \ge |G|$. If $|H|>|G|$, then replacing $H$ by a proper prefix would enlarge $\mathcal{A}$, contradicting its maximality. Hence $|H|=|G|$ and therefore $H \le G(i_{1},\ldots,i_{m})$, showing $H \in \mathcal{H}(\mathcal{G})$.

Since both $\mathcal{F}(\mathcal{G})\cup\mathcal{F}(\mathcal{H}(\mathcal{G}))$ and $\mathcal{F}(\mathcal{G})\cup\mathcal{F}(\mathcal{G}')$ are maximal LCIFs, and $\mathcal{G}' \subseteq \mathcal{H}(\mathcal{G})$, it follows that $\mathcal{G}'=\mathcal{H}(\mathcal{G})$. Therefore,
\[
\mathcal{A}=\mathcal{F}(\mathcal{G})\cup\mathcal{F}(\mathcal{H}(\mathcal{G})).
\]

Thus, Theorem 1.3 is proved.

\end{proof}

\section{ Proofs of Theorem 1.5 and Theorem 1.6}
\subsection{ Proof of Theorem 1.5} \mbox{}\\
It is known that every trivial LCIF is contained in Star family. Therefore, in this section we only consider non-trivial families. In what follows, we determine all MLCIFs with two maximal generators, in particular those of rank 2.
\begin{proof} The generating set of $\mathcal{A}$ is $\mathcal{G}\cup \mathcal{H}(\mathcal{G})$. From assumption, we have the family $\mathcal{G}\cup \mathcal{H}(\mathcal{G})$ has exxactly two maximal generators. Since $\mathcal{A}$ is a non-trivial MLCIF, we must have $|\mathcal{G}| > 0$. Thus, each of $G$ and  $\mathcal{H}$ has exactly a maximal generator. We suppose that maximal generator of $\mathcal{G}$ is $G=\{g_{1},\ldots,g_{r}\}$. Since $G \in \mathcal{G}_{k}(\overline{1})$ and $\mathcal{A}$ is maximal, by Lemma 2.3, we have $g_{1} \geq 2$ and $g_{r}\leq r+k-1$. If  $r=1$, then $G=\{a\}$, where $a\ge 2$. However, in this case, $G$ is not strongly intersecting. Therefore, we must have $r \ge 2$. The generators of the family $\mathcal{H}(G)$ are $G_{i}=\{1\} \cup [i+1,g_{i}]$ for $1 \leq i \leq r$. Assume that $g_{i+1} \geq g_{i}+2$ for some $1 \leq i \leq r-1$, then $G_{i}=\{1\}\cup[i+1,g_{i}]$ and $G_{i+1}=\{1\}\cup[i+2,g_{i+1}]$ are maximal elements of $\mathcal{H}(G)$. This contradicts the fact that $\mathcal{H}(\mathcal{G})$ has exactly one maximal generator. Therefore, the elements of $G$ are consecutive natural numbers. This means $G=[a,b]$, where $a$ and $b$ are natural numbers and $b \ge a+1$. If $b \geq a+k$ then $|G|=b-a+1 \geq k +1$, which is a contradiction since $G$ is a generator. So, we must have $b \leq a+k-1$. We will prove that $b > 2a-1$. Indees, first we assume that $b=2a-1$.
Since $|\mathcal{H}(\mathcal{G})|=1$, Then we have $G=[a,2a-1]$ and $H=\{1\} \cup [a+1,2a-1]$. We have $|G|=|H|=a$ and $H \leq G$. So $H \preceq G$. We see that the unique maximal element of $\mathcal{A}$ is $G$, contrary to the assumption. Next, we asume $b < 2a-1$. The number of elements of $G$ is $r=b-a+1 \leq a-1$. For all $1 \leq i \leq r$, we have $g_{i}=a+i-1=(a-1)+i \geq r+i \geq 2i$. However, this contradicts the fact that $G$ is strongly intersceting. In summary, we have proved that $G=[a,b]$ with $b > 2a-1$. From this, we see that the unique maximal generator of $\mathcal{H}(\mathcal{G})$ is $H=\{1\} \cup [b-a+2,b]$. Therefore, we get $\mathcal{A}=\mathcal{F}([a,b])\cup \mathcal{F}(\{1\}\cup[b-a+2,b])$.\\
Now, suppose that $\mathcal{A}$ has rank 2. Since $|G|=b-a+1 > a = |H|$, we must have $a=2$. From this, we have $b \leq k+1$. On the other hand, if $b=3$ then we have the family $\mathcal{A}_{2,3}$ which contains the unique maximal generator $\{2,3\}$. So, we must have $4 \leq b \leq k+1$. In summary, all MLCIFs having two maximal generators with rank 2 are $\mathcal{A}=\mathcal{F}([2,b])\cup \mathcal{F}(\{1,b\})$ with $4 \leq b \leq k+1$.
\end{proof}
The $\mathcal{AHM}_{b}$ families \cite{B} are those with two maximal generators and rank 2. That is, they are all represented by $\mathcal{A}=\mathcal{F}([2,b])\cup \mathcal{F}(1,b)$. Specially, if $b=k+1$ then we have the Hilton-Milner family. 

\subsection{ Proof of Theorem 1.6 }
To prove Theorem 1.6, we need several calculations given in the following lemmas.\\
For $G \in 2^{[n]}$, recall that $\mathcal{F}(G)=\{ S \in  \binom{[n]}{k}: S \preceq G \}$ and $\mathcal{L}(G)=\{S \in \binom{[n]}{|G|}: S \leq G\}$. For $G=\{g_{1},\ldots,g_{r}\}$, we also write $\mathcal{F}(G)=\mathcal{F}(g_{1},\ldots,g_{r})$ and $\mathcal{L}(G)=\mathcal{L}(g_{1},\ldots,g_{r})$. In the following, we establish some formulas concerning the size of the family $\mathcal{F}(G)$ and $\mathcal{L}(G)$. 
First, we have the following formulas, whose proofs are completely elementary.
\begin{lem}
Let $G \in 2^{[n]}$ with $G = \{ g_{1}, g_{2}, \ldots, g_{r} \}$. Then:
\begin{enumerate}[(i)]
    \item $|\mathcal{L}(g_{1}, \ldots, g_{r})|
        = \sum_{i=1}^{g_{1}} 
          |\mathcal{L}(g_{2}-i, \ldots, g_{r}-i)|$;
    \item $|\mathcal{L}([a,b])|
        = |\mathcal{L}(\{a, a+1, \ldots, b\})|
        = \binom{b}{a-1}$, for $a \le b$.
\end{enumerate}
\end{lem}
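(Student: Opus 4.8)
The plan is to establish (i) by an explicit bijection and then deduce (ii) from (i) by induction on $|G|$, finishing with the hockey-stick identity.

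For (i), I would partition $\mathcal{L}(g_{1},\ldots,g_{r})$ according to the smallest element of its members. If $S=\{s_{1}<s_{2}<\cdots<s_{r}\}\in\mathcal{L}(g_{1},\ldots,g_{r})$, then $1\le s_{1}\le g_{1}$, so $s_{1}=i$ for a unique $i\in[g_{1}]$. Fixing $i$, I would map $S$ to $S-i:=\{s_{2}-i,\ldots,s_{r}-i\}$. Since the $s_{j}$ with $j\ge 2$ strictly exceed $s_{1}=i$, the entries $s_{j}-i$ are distinct positive integers, and $s_{j}\le g_{j}$ yields $s_{j}-i\le g_{j}-i$; thus $S-i\in\mathcal{L}(g_{2}-i,\ldots,g_{r}-i)$. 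The map $T=\{t_{1},\ldots,t_{r-1}\}\mapsto\{i\}\cup\{t_{1}+i,\ldots,t_{r-1}+i\}$ is a two-sided inverse, so for each $i\in[g_{1}]$ this is a bijection between $\{S\in\mathcal{L}(g_{1},\ldots,g_{r}):\min S=i\}$ and $\mathcal{L}(g_{2}-i,\ldots,g_{r}-i)$. Summing over $i$ gives (i); I would also note the convention $|\mathcal{L}(\emptyset)|=1$, so that the case $r=1$ (where the right-hand side is $\sum_{i=1}^{g_{1}}1=g_{1}=|\mathcal{L}(g_{1})|$) is included.

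For (ii) I would induct on $r=b-a+1$. The base case $r=1$ is immediate, since $\mathcal{L}([a,a])=\{\{1\},\{2\},\ldots,\{a\}\}$ has $a=\binom{a}{a-1}$ members. For $r\ge 2$ I would apply (i) with $g_{j}=a+j-1$, which yields $|\mathcal{L}([a,b])|=\sum_{i=1}^{a}|\mathcal{L}([a+1-i,\,b-i])|$; here each interval $[a+1-i,b-i]$ has exactly $r-1$ elements, has left endpoint $\ge 1$ because $i\le a$, and is nonempty because $b\ge a+1$. The induction hypothesis gives $|\mathcal{L}([a+1-i,b-i])|=\binom{b-i}{a-i}$, and after the substitution $m=a-i$ this becomes $|\mathcal{L}([a,b])|=\sum_{m=0}^{a-1}\binom{b-a+m}{b-a}=\binom{b}{b-a+1}=\binom{b}{a-1}$, the middle equality being the hockey-stick identity $\sum_{k=r}^{n}\binom{k}{r}=\binom{n+1}{r+1}$ applied with $r=b-a$ and $n=b-1$.

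The computation is routine and I do not anticipate a genuine obstacle; the only points needing a little care are checking that the shift map in (i) is well defined into the stated ground set (which follows from $g_{r}\le n$ and $g_{j}>g_{1}\ge i$), the empty-set convention that makes the $r=1$ instance of (i) uniform, and the index bookkeeping in (ii) so that the hockey-stick identity can be applied in the stated form.
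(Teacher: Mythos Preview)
Your argument for (i) is correct and is exactly the paper's approach: partition $\mathcal{L}(g_{1},\ldots,g_{r})$ by the value of the smallest element and shift by $-i$ to get a bijection with $\mathcal{L}(g_{2}-i,\ldots,g_{r}-i)$.

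Your proof of (ii) is also correct, but it takes a different route from the paper. The paper observes directly that $\mathcal{L}([a,b])=\binom{[b]}{\,b-a+1\,}$: any $(b-a+1)$-subset $S=\{s_{1}<\cdots<s_{b-a+1}\}$ of $[b]$ automatically satisfies $s_{j}\le b-(b-a+1-j)=a+j-1$, and conversely $S\le[a,b]$ forces $s_{b-a+1}\le b$, hence $S\subseteq[b]$. This one-line bijection gives $|\mathcal{L}([a,b])|=\binom{b}{b-a+1}=\binom{b}{a-1}$ immediately. Your inductive route via (i) and the hockey-stick identity is sound but longer; the trade-off is that your approach illustrates how (i) can be used recursively, while the paper's direct argument avoids induction and the auxiliary identity altogether.
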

\begin{proof}(i) We partition the set $\mathcal{L}(G)$ into subsets $\mathcal{L}_{i}$ as follows:$$\mathcal{L}_{i}=\left \{A=\{a_{1},\ldots,a_{r}\} \in \mathcal{L}(G): a_{1}=i\right \}, 1 \le i \le g_{1}$$
\[
\mathcal{L}(g_{1},\dots,g_{r})=\bigsqcup_{i=1}^{g_{1}}\mathcal{L}_{i},
\qquad
\mathcal{L}_{i}\cap\mathcal{L}_{j}=\varnothing\quad\text{for all }1\le i<j\le g_{1}.
\]
Note that $a_{j}-i \le g_{j}-i$, so the set $\{a_{2}-i,\ldots,a_{r}-i\} \le \{g_{2}-i,\ldots,g_{r}-i\}$, for all $2 \le j \le r$. It implies that $\{a_{2}-i,\ldots,a_{r}-i\} \in \mathcal{L}(g_{2}-i,\ldots,g_{r-i}\}$. Thus, we obtain a bijection between $\mathcal{L}_{i}$ and $\mathcal{L}(g_{2}-i,\ldots,g_{r}-i)$. Hence $|\mathcal{L}(g_{1}, \ldots, g_{r})|
        =\sum_{i}^{g_{1}}|\mathcal{L}_{i}|= \sum_{i=1}^{g_{1}} 
          |\mathcal{L}(g_{2}-i, \ldots, g_{r}-i)|$.\\
(ii) Each set $S \in \mathcal{L}([a,b])$
corresponds to a unique way of choosing a subset of $[b]$ consisting of $b-a+1$ elements. So, $|\mathcal{L}([a,b])|
        = |\mathcal{L}(\{a, a+1, \ldots, b\})|
        = \binom{b}{b-a+1}=\binom{b}{a-1}$.
\end{proof}
\begin{lem}
If $0 \le a < b \le \left\lfloor \frac{n}{2} \right\rfloor$, then $\binom{n}{a} < \binom{n}{b}$.
\end{lem}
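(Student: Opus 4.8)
The final statement to be proven is the purely combinatorial inequality: if $0 \le a < b \le \lfloor n/2 \rfloor$, then $\binom{n}{a} < \binom{n}{b}$.

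\textbf{Proof plan.} The plan is to reduce the inequality to a telescoping product of consecutive ratios and show each factor exceeds $1$. First I would observe that it suffices to prove the single-step inequality $\binom{n}{j} < \binom{n}{j+1}$ for every $j$ with $0 \le j < \lfloor n/2 \rfloor$; the general case then follows immediately by chaining these inequalities from $j = a$ up to $j = b-1$, which is a nonempty range since $a < b \le \lfloor n/2 \rfloor$. So the whole argument rests on this one-step claim.

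For the one-step claim, I would use the standard ratio identity
\[
\frac{\binom{n}{j+1}}{\binom{n}{j}} = \frac{n-j}{j+1}.
\]
It remains to check that $\frac{n-j}{j+1} > 1$, equivalently $n - j > j + 1$, equivalently $n > 2j + 1$, equivalently $j < \frac{n-1}{2}$. Now I would split into the parity of $n$: if $n$ is even, then $\lfloor n/2 \rfloor = n/2$ and the hypothesis $j < n/2$ gives $2j < n$, hence $2j + 1 \le n$; but equality $2j+1 = n$ is impossible for even $n$, so $2j + 1 < n$ as needed. If $n$ is odd, then $\lfloor n/2 \rfloor = (n-1)/2$ and $j < (n-1)/2$ gives $2j < n - 1$, i.e. $2j + 1 < n$, again as needed. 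In both cases the ratio is strictly greater than $1$, so $\binom{n}{j} < \binom{n}{j+1}$.

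\textbf{Main obstacle.} Honestly there is no serious obstacle here — this is the classical unimodality of binomial coefficients. The only point requiring a moment's care is the boundary behaviour of the floor function: one must make sure that $b \le \lfloor n/2 \rfloor$ together with $a < b$ really does force every intermediate index $j$ (for $a \le j \le b-1$) to satisfy the strict bound $j < \frac{n-1}{2}$, and this is exactly what the parity case-split above handles. After that, the chaining step $\binom{n}{a} < \binom{n}{a+1} < \cdots < \binom{n}{b}$ completes the proof.
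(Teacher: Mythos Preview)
Your proof is correct, but it takes a different route from the paper's. The paper does not telescope through consecutive ratios; instead it double-counts the pairs $(A,B)$ with $A\subseteq B\subseteq [n]$, $|A|=a$, $|B|=b$, obtaining the identity $\binom{n}{b}\binom{b}{a}=\binom{n}{a}\binom{n-a}{b-a}$, and then observes that $n-a>b$ (since $a<b\le n/2$) forces $\binom{n-a}{b-a}>\binom{b}{b-a}=\binom{b}{a}$, giving the inequality in one step. Your ratio-and-chaining argument is arguably the more direct and standard route, needing nothing beyond the elementary formula $\binom{n}{j+1}/\binom{n}{j}=(n-j)/(j+1)$; the paper's approach, on the other hand, jumps straight from $a$ to $b$ without any telescoping or parity case-split, at the cost of invoking a combinatorial identity. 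Both arguments are entirely standard proofs of binomial unimodality.
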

\begin{proof}
We count the number of pairs $(A,B)$ satisfying $ A \subseteq B \subseteq [n], |A|=a$ and $|B|=b$ in two different ways. Choose $B$ first, which can be done in $\binom{n}{b}$ ways. Then choose $A$ as an $a$-subset of $B$, which can be done in $\binom{b}{a}$ ways. Hence, the total number of such pairs $(A,B)$ is $\binom{n}{b}\binom{b}{a}$. Next, choose $A$ first, which can be done in $\binom{n}{a}$ ways. Then choose $b-a$ elements from $[n]\setminus A$ to form $B$, which can be done in $\binom{n-a}{\,b-a\,}$ ways. Therefore, the total number of pairs is also $\binom{n}{a}\binom{n-a}{\,b-a\,}$.
So 
\[
\binom{n}{b}\binom{b}{a} = \binom{n}{a}\binom{n-a}{\,b-a\,}.
\]
Thus,
\[
\frac{\binom{n}{b}}{\binom{n}{a}} = \frac{\binom{n-a}{\,b-a\,}}{\binom{b}{a}}.
\]

Since $0 \le a < b \le \left\lfloor \frac{n}{2} \right\rfloor$, we have $n-a > b$. Therefore,
\[
\binom{n-a}{\,b-a\,} > \binom{b}{\,b-a\,} = \binom{b}{a}.
\]
which implies $\binom{n}{b} > \binom{n}{a}$. This completes the proof.
\end{proof}

To prove Theorem 1.6, we need to determine the size of a rank-$2$ MLCIF that has exactly two maximal generators. The following lemma provides this.
\begin{lem} Let $\mathcal{A}=\mathcal{F}([2,b]) \cup \mathcal{F}(1,b)$, where $4 \leq b \leq k+1$. Then, 
  $$|\mathcal{A}|=\binom{n-1}{k-1}-\binom{n-b}{k-1}+\binom{n-b}{k-b+1}$$
\end{lem}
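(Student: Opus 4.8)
The plan is to split $\mathcal{A}$ according to whether $1\in S$, counting the two parts separately; this sidesteps having to evaluate the intersection $\mathcal{F}([2,b])\cap\mathcal{F}(1,b)$ directly. I would first treat the members $S=\{s_{1}<\cdots<s_{k}\}$ of $\mathcal{A}$ with $1\notin S$. Every set in $\mathcal{F}(1,b)$ contains $1$, so such an $S$ belongs to $\mathcal{A}$ exactly when $S\in\mathcal{F}([2,b])$. Since the $i$-th element of $[2,b]$ is $i+1$ and $|[2,b]|=b-1\le k$, the relation $S\preceq[2,b]$ says precisely that $s_{i}\le i+1$ for $1\le i\le b-1$. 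Using $s_{1}\ge 2$ (because $1\notin S$) together with $s_{i}>s_{i-1}$, a one-line induction forces $s_{i}=i+1$ for every $1\le i\le b-1$; hence $\{s_{1},\dots,s_{b-1}\}=[2,b]$, while the remaining $k-(b-1)$ elements form an arbitrary $(k-b+1)$-subset of $[b+1,n]$, and conversely every such set satisfies $S\preceq[2,b]$. So the number of members of $\mathcal{A}$ avoiding $1$ equals $\binom{n-b}{k-b+1}$ (which is $1$, namely $S=[2,k+1]$, in the boundary case $b=k+1$).

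For the second part I would take the members $S$ of $\mathcal{A}$ with $1\in S$. Here $\mathcal{F}([2,b])$ contributes nothing new: if $1\in S$ and $S\preceq[2,b]$, then $s_{2}\le 3\le b$ (this is where $b\ge 4$ enters), and since also $s_{1}=1$ and $|S|=k\ge 2$, we get $S\preceq\{1,b\}$, i.e. $S\in\mathcal{F}(1,b)$. As every member of $\mathcal{F}(1,b)$ contains $1$, it follows that the members of $\mathcal{A}$ containing $1$ are precisely those of $\mathcal{F}(1,b)$. A $k$-set lies in $\mathcal{F}(1,b)$ exactly when it has the form $\{1\}\cup T$ with $T$ a $(k-1)$-subset of $[2,n]$ satisfying $\min T\le b$, equivalently with $T$ not contained in $[b+1,n]$ (so in fact $\mathcal{F}(1,b)=\mathcal{S}([2,b])$), and there are $\binom{n-1}{k-1}-\binom{n-b}{k-1}$ such sets.

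Adding the two counts yields $|\mathcal{A}|=\binom{n-1}{k-1}-\binom{n-b}{k-1}+\binom{n-b}{k-b+1}$, as claimed. I do not expect a genuine obstacle: the whole computation reduces to the forced-coordinate induction in the first part and the elementary inclusion $\mathcal{F}([2,b])\cap\{S:1\in S\}\subseteq\mathcal{F}(1,b)$ in the second, both routine. One could instead compute $|\mathcal{F}([2,b])|$ and $|\mathcal{F}(1,b)|$ via the $|\mathcal{L}(\cdot)|$ identities proved above, describe $\mathcal{F}([2,b])\cap\mathcal{F}(1,b)$ explicitly, and finish by inclusion--exclusion, but the split by $1\in S$ is shorter precisely because it never needs the intersection term.
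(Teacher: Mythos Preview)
Your proof is correct and genuinely different from the paper's. The paper proceeds by inclusion--exclusion, computing $|\mathcal{F}(1,b)|=\binom{n-1}{k-1}-\binom{n-b}{k-1}$, $|\mathcal{F}([2,b])|=b\binom{n-b}{k-b+1}+\binom{n-b}{k-b}$, and $|\mathcal{F}([2,b])\cap\mathcal{F}(1,b)|=|\mathcal{F}(\{1\}\cup[3,b])|=(b-1)\binom{n-b}{k-b+1}+\binom{n-b}{k-b}$, the last two via a case split on $s_b$; the answer then drops out after cancellation. Your partition by $1\in S$ is exactly the alternative you mention at the end, and it is the cleaner route: once you observe that $\mathcal{F}(1,b)$ consists precisely of the members of $\mathcal{A}$ containing $1$, and that $1\notin S$ forces $\{s_1,\dots,s_{b-1}\}=[2,b]$ in $\mathcal{F}([2,b])$, the two counts are immediate and no intersection term or $|\mathcal{L}(\cdot)|$ identities are needed. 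The paper's approach has the minor side benefit of recording $|\mathcal{F}([2,b])|$ and $|\mathcal{F}(\{1\}\cup[3,b])|$ explicitly (quantities that recur in the proof of Theorem~1.6), but for the lemma itself your argument is shorter. One small remark: your inclusion $\{S\in\mathcal{F}([2,b]):1\in S\}\subseteq\mathcal{F}(1,b)$ only needs $b\ge 3$, not $b\ge 4$, so the hypothesis $b\ge 4$ is not really ``where it enters'' here.
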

\begin{proof} We will use the formula $$|\mathcal{A}|=\left|\mathcal{F}([2,b])\right|+|\mathcal{F}(1,b)|-|\mathcal{F}([2,b])\cap \mathcal{F}(1,b)|$$
By applying Lemma 3.1 and noting that $|\mathcal{F}(g_{1},\ldots,g_{r})|=|\mathcal{L}(\{g_{1},\ldots,g_{r}\}\cup [n-k+r+1,n])|$, we obtain:
\begin{align*}
|\mathcal{F}(1,b)|&=\left|\mathcal{L}\left(\{1,b\}\cup[n-k+3,n]\right)\right|\\
&=|\mathcal{L}(\{b-1\}\cup [n-k+2,n-1])| =\sum_{i=1}^{b-1}|\mathcal{L}([n-k-i+2,n-i-1])|\\
&=\sum_{i=1}^{b-1}\binom{n-i-1}{n-k-i+1}=\sum_{i=1}^{b-1}\binom{n-i-1}{k-2}\\
&=\binom{n-2}{k-2}+\binom{n-3}{k-2}+\ldots+\binom{n-b}{k-2}\\&=\binom{n-1}{k-1}-\binom{n-b}{k-1}
\end{align*}
Next, we we will compute $\left|\mathcal{F}([2,b])\right|$. Assume $S \in \mathcal{F}([2,b])$ with $S=\{s_{1},\ldots,s_{b},\ldots,s_{k}\} \preceq \{2,\ldots,b\}$. Clearly we cannot have $s_{b} \leq b-1$, So, there are two possibilities for $s_{b}$. First, we consider $s_{b}\geq b+1$. We have $\{s_{1},\ldots,s_{b-1}\} \leq \{2,\ldots,b\}$. It implies that $\{s_{1},\ldots,s_{b-1}\} \in \binom{[b]}{b-1}$ and $\{s_{b},\ldots,s_{k}\} \in \binom{[b+1,n]}{k-b+1}$, Thus, the number of sets $S$ in this case is $b\binom{n-b}{k-b+1}$. Next, we consider $s_{b}=b$. We must have $\{s_{1},\ldots,s_{b-1}\}=\{1,\ldots,b-1\}$ and $\{s_{b+1},\ldots,s_{k}\} \in \binom{[b+1,n]}{k-b}$. We see that the number of sets $S$ in this case is $\binom{n-b}{k-b}$. Thus, $$|\mathcal{F}([2,b])|=b\binom{n-b}{k-b+1}+\binom{n-b}{k-b}$$.\\
Proceeding in the same way, we compute $|\mathcal{F}([2,b])\cap \mathcal{F}(1,b)|$. It is easy to prove that
$\mathcal{F}([2,b])\cap \mathcal{F}(1,b)= \mathcal{F}(\{1\}\cup [3,b])$. Take $S \in \mathcal{F}(\{1\}\cup[3,b])$. Assume $S=\{1,s_{2},\ldots,s_{k}\}$. We have $\{1,s_{2},\ldots,s_{b-1}\} \leq \{1,3,\ldots,b\}$. If $s_{b} \le b-1$ then $s_{2} \le 1$. This is a contradiction. So, we must have $s_{b} \geq b$. We consider two cases. First, we consider $s_{b} \ge b+1$. We have $\{s_{2},\ldots,s_{b-1}\} \subseteq [2,b]$ so there are $b-1$ such sets $\{s_{2},\ldots,s_{b-1}\}$ in total. There are also $\binom{n-b}{k-b+1}$ ways to choose the sets $\{s_{b},\ldots,s_{k}\} \subseteq [b+1,n]$. Hence, there are $(b-1)\binom{n-b}{k-b+1}$ set $S$ satisfying conditions $S \preceq \{1,3,\ldots,b\}$ and $s_{b} \geq b+1$. We consider $s_{b}=b$. We have $\{1,s_{2},\ldots,s_{b-1}\}=[b-1]$. We have $\{s_{b+1},\ldots.s_{k}\} \subseteq [b+1,n]$. Thus, there are $\binom{n-b}{k-b}$ ways to choose $\{s_{b+1},\ldots.s_{k}\}$. Hence, there are $\binom{[n-b]}{k-b}$ ways to choose $S \in \binom{[n]}{k}$ satisfying the conditions $S \preceq \{1,3,\ldots,b\}$ and $s_{b}=b$. In summary, $$\left|\mathcal{F}([2,b])\cap \mathcal{F}(1,b)\right|=\left|\mathcal{F}(\{1\}\cup[3,b])\right|=(b-1)\binom{n-b}{k-b+1}+\binom{n-b}{k-b}$$
Finally, we get 
$$|\mathcal{A}|=\binom{n-1}{k-1}-\binom{n-b}{k-1}+\binom{n-b}{k-b+1}$$
\end{proof}
\noindent
\textbf{Proof Theorem 1.6}
\begin{proof}
\noindent
(i)Assume $X \cap [2,b] \neq \emptyset$. Let $Y = [n] \setminus X$. Then $1 \in Y$ and $|Y| = n - d$. We will use the formula $|\mathcal{A}(X)|=|\mathcal{F}(1,b)(X)|+|(\mathcal{F}([2,b])\setminus\mathcal{F}(1,b))(X)|$
We first count the number of members of $\mathcal{F}(1,b)$ that are disjoint from $X$.
Note that every element of $[b]$ belongs to exactly one of the two sets $X$ or $Y$.
The number of elements of $X$ not exceeding $b$ is $|X \cap [b]| = \mu_X(b)$.
Hence, the number of elements of $Y$ not exceeding $b$ is $s = b - \mu_X(b)$.

Let $S$ be a $k$-set in $\mathcal{F}(1,b)$ such that $S \cap X = \emptyset$.
Then $S \in \mathcal{F}(1,b) \cap \binom{Y}{k}$.
Since $S \preceq \{1,b\}$, we must have $1 \in S$.
Let $T = S \setminus \{1\}$. Clearly, $T \in \binom{Y \setminus \{1\}}{k-1}$.

If $T \in \binom{Y \setminus \{1\}}{k-1}$, then either $\{1\} \cup T \in \mathcal{F}(1,b)$ or $\{1\} \cup T \notin \mathcal{F}(1,b)$.
In the latter case, $T$ must belong to $\binom{Y \setminus \{y_1, \dots, y_s\}}{k-1}$.
Therefore, the number of $(k-1)$-sets in $\binom{Y \setminus \{1\}}{k-1}$ not contained in $\mathcal{F}(1,b)$ is $\binom{n - d - s}{k-1}$.
Hence, the number of those contained in $\mathcal{F}(1,b)$ is
\[
\binom{n - d - 1}{k - 1} - \binom{n - d - s}{k - 1}
= \binom{n - d - 1}{k - 1} - \binom{n - d - b + \mu_X(b)}{k - 1}.
\]

On the other hand, for any $S \in \mathcal{F}([2,b]) \setminus \mathcal{F}(1,b)$, we have $1 \notin S$, hence $[2,b] \subseteq S$.
Since $X \cap [2,b] \ne \emptyset$, we also have $S \cap X \ne \emptyset$.
Thus, the number of elements of $\mathcal{A}$ intersecting $X$ is
\[
|\mathcal{A}(X)| = |\mathcal{A}| - \binom{n - d - 1}{k - 1} + \binom{n - d - b + \mu_X(b)}{k - 1},
\]
that is, ( by Lemma 3.3)
\[
|\mathcal{A}(X)| = 
\binom{n - 1}{k - 1} - \binom{n - b}{k - 1}
+ \binom{n - b}{k - b + 1}
- \binom{n - d - 1}{k - 1}
+ \binom{n - d - b + \mu_X(b)}{k - 1}.
\]
We consider the special cases listed below. 
First, we consider the case $X \subseteq [2,b]$. All elements of $X$ are at most $b$; in the words $\mu_{X}(b)=d$. It follows that $$|\mathcal{A}(X)|=\binom{n-1}{k-1}-\binom{n-d-1}{k-1}+\binom{n-b}{k-b+1}=|\mathcal{S}(X)|+\binom{n-b}{k-b+1}$$
Next, we consider the case $X \cap [2,b] \subsetneq X$. In this case, we have $\mu_{X}(b) < d $. So, $d-\mu_{X}(b) \geq 1$. Thus,
$$\binom{n-d-b+\mu_{X}(b)}{k-1} \leq \binom{n-b-1}{k-1}=\binom{n-b}{k-1}-\binom{n-b-1}{k-2}$$
From this, we get
\begin{align*}
     |\mathcal{A}(X)|& = \binom{n-1}{k-1}-\binom{n-b}{k-1} +\binom{n-b}{k-b+1}-\binom{n-d-1}{k-1}+\binom{n-d-b+\mu_{X}(b)}{k-1}\\
     & \leq |\mathcal{S}(X)| +\binom{n-b}{k-b+1}-\binom{n-b-1}{k-2}
\end{align*}
Now, we prove that if $4 \le b \le k+1$ and $n$ is sufficiently large, then $\binom{n-b}{k-b+1} < \binom{n-b-1}{k-2}$.\\
Indeed, consider the ratio
\begin{align*}
R &= \frac{\binom{n-b}{\,k-b+1\,}}{\binom{n-b-1}{\,k-2\,}}= \frac{(n-b)!}{(k-b+1)!(n-k-1)!} \cdot \frac{(k-2)!(n-k-b+1)!}{(n-b-1)!}\\
&= (n-b)\cdot \frac{(k-2)!}{(k-b+1)!}\cdot \frac{(n-k-b+1)!}{(n-k-1)!}\\
&=(n-b).\prod_{i=1}^{b-3}(k-b+1+i).\frac{1}{\prod_{i=1}^{b-2}(n-k-b+1+i)}\\
&=\frac{n-b}{n-k-1}\cdot \frac{\prod_{i=1}^{b-3}(k-b+i+1)}{\prod_{i=1}^{b-3}(n-k-b+i+1)}=\frac{n-b}{n-k-1}.\prod_{i=1}^{b-3}\frac{k-b+i+1}{n-k-b+i+1}
\end{align*}
We find that for $n$ is sufficiently large, we have $\frac{k-b+i+1}{n-k-b+i+1} < 1$
On the other hand, consider $g(n)=\frac{n-b}{n-k-1}$. It is easy that $g(n)$ is an decreasing function. So, for sufficiently large $n$, we have $g(n) < 1 $. Thus, $R < 1$ for suficiently large $n$. It follows that $\binom{n-b}{k-b+1}-\binom{n-b-1}{k-2} < 0$ and therefore $\mathcal{A}(X) < \mathcal{S}(X)$.\\ 
\noindent
(ii) Assume $X \cap [2,b] = \emptyset$. We will count the number of sets in $\mathcal{A}$ that are disjoint from $X$. We will use the formula $|\mathcal{A}_{0}(X)|=|\mathcal{F}(1,b)_{0}(X)|+|(\mathcal{F}([2,b])_{0}(X)|-|\mathcal{F}(1,b)_{0}(X)\cap \mathcal{F}([2,b])_{0}(X)|$. First, we will count the number of elements of the set $\mathcal{F}(1,b)_{0}(X)$. 
The total number of $(k-1)$-sets of $[n - d - 1]$ is $\binom{n - d - 1}{k - 1}$, and among them, the number of elements that are disjoint from $X$ and not contained in $\mathcal{F}(1,b)$ is $\binom{n - d - b}{k - 1}$.
Hence,
$|\mathcal{F}(1,b)_{0}(X)|=\binom{n - d - 1}{k - 1} - \binom{n - d - b}{k - 1}$. Next, we count the sets $S \in \mathcal{F}([2,b])$ with $S \cap X = \emptyset$.
We may choose $A \le [2,b]$ in $b$ ways and a $(k - b + 1)$-set $B$ from $[b + 1, n] \setminus X$ in $\binom{n - b - d}{k - b + 1}$ ways.
Therefore, there are $b \binom{n - b - d}{k - b + 1}$ ways to form $S = A \cup B$ satisfying the required condition. Thus, $|\mathcal{F}([2,b])_{0}(X)|=b\binom{n-b-d}{k-1}$. Finally, we count $|\mathcal{F}(1,b)_{0}(X)\cap \mathcal{F}([2,b])_{0}(X)|$. It is obvious that $\mathcal{F}(1,b)_{0}(X)\cap \mathcal{F}([2,b])_{0}(X)=(\mathcal{F}(1,b) \cap \mathcal{F}([2,b]))_{0}(X)=\mathcal{F}(\{1\}\cup[3,b])_{0}(X)$.
Each $S \in \mathcal{F}(\{1\}\cup[3,b])_{0}(X)$ is obtained by choosing a set $A \le \{1\} \cup [3,b]$ and a set $B \in \binom{[b + 1, n]}{k - b + 1}$, and setting $S = A \cup B$. Therefore, $|\mathcal{F}(1,b)_{0}(X)\cap \mathcal{F}([2,b])_{0}(X)|=(b - 1)\binom{n - b - d}{k - b + 1}$.\\

Hence, the total number of elements of $\mathcal{A}$ that are disjoint from $X$ is
\begin{align*}
|\mathcal{A}_{0}(X)|&=\binom{n - d - 1}{k - 1} - \binom{n - d - b}{k - 1} 
   + b \binom{n - b - d}{k - b + 1} 
   - (b - 1)\binom{n - b - d}{k - b + 1} \\
&= \binom{n - d - 1}{k - 1} 
   - \binom{n - d - b}{k - 1}
   + \binom{n - b - d}{k - b + 1}.
\end{align*}
Consequently, the number of elements of $\mathcal{A}$ having nonempty intersection with $X$ is
\begin{align*}
|\mathcal{A}(X)| &=|\mathcal{A}|-|\mathcal{A}_{0}(X)|\\
&=\binom{n - 1}{k - 1} - \binom{n - b}{k - 1}
+ \binom{n - b}{k - b + 1}
- \binom{n - d - 1}{k - 1}
+ \binom{n - d - b}{k - 1}
- \binom{n - b - d}{k - b + 1}.
\end{align*}
\noindent
To establish $|\mathcal{A}(X)| \le |\mathcal{S}(X)|$, we rewrite the above equality as follows.
$$|\mathcal{A}(X)|= |\mathcal{S}(X)|+\left[\binom{n-b}{k-b+1}-\binom{n-b-d}{k-b+1}\right]-\left[\binom{n-b}{k-1}-\binom{n-b-d}{k-1}\right]$$
Using (1.3) we have 
$$\alpha=\binom{n-b}{k-b+1}-\binom{n-b-d}{k-b+1}= \sum_{j=0}^{d}\binom{n-b-d+j}{k-b}$$
and $$\beta=\binom{n-b}{k-1}-\binom{n-b-d}{k-1}=\sum_{j=0}^{d}\binom{n-b-d+j}{k-2}$$
To determine the sign of the difference $\alpha-\beta$, we need the following result, which can be easily proved by a combinatorial argument: if $ 0 \leq a < b \leq \left[\frac{n}{2}\right]$, then $\binom{n}{a} < \binom{n}{b}$.\\
Thus, $$\alpha-\beta=\sum_{j=0}^{d}\left[\binom{n-b-d+j}{k-b}-\binom{n-b-d+j}{k-2}\right]$$
Since $j \leq d$ and $b \geq 4$, We have $ n\geq 2k \geq 2k+d-j + b-4$. Therefore $n-b-d+j \geq 2k-4$ and it follows that $$\frac{n-b-d+j}{2}\geq k-2 > k-b$$
From Lemma 3.2 we have $\alpha -\beta <0$. We conclude that $\mathcal{A}(X) \leq \mathcal{S}(X)$.
\end{proof}

\section{ Further Direction}
Some questions related to the generating sets $\mathcal{G}$ of MLCIF $\mathcal{A}=\mathcal{F}(\mathcal{G})$ is arised in this section.\\ \\
For $A=\mathcal{F}(\mathcal{G})$, if $\mathcal{A}$ has rank 1 or 2 then we have full described the forms of $\mathcal{A}$. So, a question is raised naturally as follows:\\
\textbf{Question 1.}\textit{ Identify all families $\mathcal{A}=\mathcal{F}(\mathcal{G})$ with rank $r \geq 3$}\\ \\
Theorem 1.6 determines $\mathcal{A}(X)$ when $\mathcal{A}$
is an MLCIF with exactly two maximal generators and of rank 2. We propose the analogous problem in which the family $\mathcal{A}$ has rank at least 3,\\
\textbf{ Question 2.} \textit{Let $X \subseteq [2,n]$ with $|X|=d$ and let $\mathcal{A}$ be an MLCIF with exactly two maximal generators of rank $r \geq 3$. Compare $\mathcal{A}(X)$ and $\mathcal{S}(X)$}

\section*{Statements and Declarations}
\textbf{Funding}\\
The author declares that no funds, grants, or other financial support were received for the research, authorship, or publication of this article.\\ \\
\medskip
\textbf{Competing Interests}\\  
The author declares that there are no financial or non-financial interests that could be perceived as directly or indirectly influencing the work reported in this paper.\\ \\
\medskip
\textbf{Author Contributions}\\  
The author is solely responsible for the conception of the problem, the development of the proofs, and the preparation of the manuscript.\\


\begin{thebibliography}{10}

\bibitem{EKR}
P. Erd\H{o}s, C. Ko, and R. Rado,
\textit{Intersection theorems for systems of finite sets},
Quart. J. Math. Oxford Ser. (2) \textbf{12} (1961), 313--320.

\bibitem{Fr87}
P. Frankl,
\textit{The shifting technique in extremal set theory},
pp.~81--110 in \textit{Surveys in Combinatorics 1987},
London Math. Soc. Lecture Note Ser., vol.~123,
Cambridge Univ. Press, 1987.

\bibitem{AK}
R. Ahlswede and L. Khachatrian,
\textit{The complete intersection theorem for systems of finite sets},
European J. Combin. \textbf{18} (1997), 125--136.

\bibitem{HM}
A. J. W. Hilton and E. C. Milner,
\textit{Some intersection theorems for systems of finite sets},
Quart. J. Math. Oxford Ser. (2) \textbf{18} (1967), 369--384.

\bibitem{PB}
P. Borg,
\textit{Maximum hitting of a set by compressed intersecting families},
Graphs Combin. \textbf{27} (2011), 785--797.

\bibitem{BB}
B. Bond,
\textit{EKR sets for large $n$ and $k$},
Graphs Combin. \textbf{32} (2016), 495--510.

\bibitem{BM}
C. Bowtell and R. Mycroft,
\textit{Classification of maximum hittings by large families},
Graphs Combin. \textbf{34} (2018), 1749--1754.

\bibitem{B}
B. Barber,
\textit{Maximum hitting for $n$ sufficiently large},
Graphs Combin. \textbf{30} (2014), 267--274.

\bibitem{N}
N. T. Tuan and N. A. Thi,
\textit{Characterizing maximal shifted intersecting set systems and short injective proofs of the Erd\H{o}s--Ko--Rado and Hilton--Milner Theorems},
Mosc. J. Comb. Number Theory \textbf{12} (2023), 89--96.

\bibitem{TTT}
N. T. Tuan, N. A. Thi, and T. D. Thu,
\textit{On generating sets of left-compressed intersecting families},
arXiv:2510.05666v3 [math.CO].

\end{thebibliography}
\end{document}